\newtheorem{theorem}{Theorem}[section]
\newtheorem{corollary}[theorem]{Corollary}
\newtheorem{lemma}[theorem]{Lemma}
\newtheorem{prop}[theorem]{Proposition}
\theoremstyle{definition}
\theoremstyle{remark}
\newtheorem{rem}[theorem]{Remark}
\numberwithin{equation}{section}
\newcommand{\R}{{\mathds R}}
\newcommand{\N}{{\mathds N}}
\newcommand{\Z}{{\mathds Z}}
\renewcommand{\leq}{\leqslant}
\renewcommand{\le}{\leqslant}
\renewcommand{\geq}{\geqslant}
\renewcommand{\ge}{\geqslant}
\newcommand{\eps}{\varepsilon }
\renewcommand{\epsilon}{\varepsilon }
\newcommand{\PV}{{\rm{P.V.}}\,}
\def\Xint#1{\mathchoice
{\XXint\displaystyle\textstyle{#1}}%
{\XXint\textstyle\scriptstyle{#1}}%
{\XXint\scriptstyle\scriptscriptstyle{#1}}%
{\XXint\scriptscriptstyle\scriptscriptstyle{#1}}%
\!\int}
\def\XXint#1#2#3{{\setbox0=\hbox{$#1{#2#3}{\int}$ }
\vcenter{\hbox{$#2#3$ }}\kern-.6\wd0}}
\def\ave{\Xint-}
\newlength{\defbaselineskip}
\newcommand{\setlinespacing}[1]
           {\setlength{\baselineskip}{#1 \defbaselineskip}}
\author[S. Dipierro]{Serena Dipierro}
\address[Serena Dipierro]{School of Mathematics, University of Edinburgh, 
King's Buildings, Mayfield Road, Edinburgh EH9 3JZ, Scotland, UK
}
\email{serena.dipierro@ed.ac.uk}
\author[A. Figalli]{Alessio Figalli}
\address[Alessio Figalli]{Mathematics Department,
University of Texas at Austin,
RLM 8.100, 2515 Speedway Stop C1200,          
Austin TX 78712-1202, USA}  
\email{figalli@math.utexas.edu}
\author[E. Valdinoci]{Enrico Valdinoci}
\address[Enrico Valdinoci]{Weierstra{\ss} Institut f\"ur Angewandte Analysis und Stochastik, Hausvogteiplatz 11A, 10117 Berlin, Germany}
\email{enrico.valdinoci@wias-berlin.de}
\begin{document}

\subjclass[2010]{49N60, 35B05, 35Q99, 35B40, 35J25, 35D30, 35G25.}

\keywords{Nonlocal Peierls--Nabarro model, dislocation dynamics, 
fractional Laplacian, oscillation and regularity results.}

\thanks{{\it Acknowledgements}.
The first author has been supported by EPSRC grant  EP/K024566/1 
``Monotonicity formula methods for nonlinear PDEs''. 
The second author has been supported 
by NSF grant DMS-1262411 
``Regularity and stability results in variational problems''.
The third author has been supported by ERC grant 277749 ``EPSILON Elliptic
Pde's and Symmetry of Interfaces and Layers for Odd Nonlinearities''.}

\title[Dislocation dynamics in crystals]{Strongly nonlocal \\
dislocation dynamics in crystals}

\begin{abstract}
We consider the equation
$$ v_t=L_s v-W'(v)+\sigma_\epsilon(t,x) \quad {\mbox{ in }} (0,+\infty)\times\R,$$
where $L_s$ is an integro-differential operator of order~$2s$,
with~$s\in(0,1)$,
$W$ is a periodic potential, and~$\sigma_\epsilon$ is a small external stress.
The solution~$v$ represents the atomic dislocation
in the Peierls--Nabarro model 
for crystals, and we specifically consider the case~$s\in(0,1/2)$,
which takes into account a strongly nonlocal elastic term.

We study the evolution
of such dislocation function for macroscopic space and time scales,
namely we introduce the function
$$v_{\epsilon}(t,x):=v\left(\frac{t}{\epsilon^{1+2s}},\frac{x}{\epsilon}\right).$$
We show that, for small~$\epsilon$, the function~$v_\epsilon$ approaches the sum of step
functions. {F}rom the physical point of view, this shows that
the dislocations have the tendency to concentrate 
at single points of the crystal, where the size of the slip coincides with the
natural periodicity of the medium. We also show that the motion of
these dislocation points
is governed by an interior repulsive potential that is superposed
to an elastic reaction to
the external stress.
\end{abstract}

\maketitle

{\small
\tableofcontents 
}

\section{Introduction}

In this paper we deal with an integro-differential equation
of fractional order derived from the classical Peierls--Nabarro model
for crystal dislocations.
Specifically we will focus on the case in which
the fractional order of the equation is low, which corresponds
to a situation in which the long-range elastic interactions
give the highest contribute to the energy. In this framework,
we will describe the evolution of the atom dislocation
function by showing that, for sufficiently long times
and at a macroscopic scale, the dislocation function
approaches the superposition
of a finite number of dislocations. These individual
dislocations have size
equal to
the characteristic period of the crystal
and they occur at some
specific points, which in turn evolve according to a repulsive potential
and reacting elastically to the external stress.

More precisely,
we consider the problem
\begin{equation}\label{Pp}
v_t=L_s v-W'(v)+\sigma_\epsilon(t,x) \quad {\mbox{ in }} (0,+\infty)\times\R, \end{equation}
where~$s\in(0,1)$, $L_s$ is the so-called fractional Laplacian,
and~$W$ is a $1$-periodic potential.
More explicitly, given $\varphi\in C^2(\R)\cap L^\infty(\R)$ and $x\in\R$,
we define
$$ L_s \varphi(x):=\frac{1}{2} \int_\R \frac{\varphi(x+y)+\varphi(x-y)-2\varphi(x)}{|y|^{1+2s}}\,dy.$$
We refer to~\cite{SilvTH, DPV12} for a basic introduction 
to the fractional Laplace operator.
As for the potential, we assume that
\begin{equation}\label{Wass} 
\left\{ \begin{array}{lllll} 
W\in C^{3,\alpha}(\R), \quad {\mbox{ for some }} 0<\alpha<1, \\
W(x+1)=W(x) \quad {\mbox{ for any }}x\in\R, \\ 
W(k)=0 \quad {\mbox{ for any }}k\in\Z, \\  
W>0
\quad {\mbox{ in }}\R\setminus\Z, \\  
W''(0)>0. \end{array} \right.
\end{equation}
As customary,~$\epsilon>0$ is a small scale parameter, and $\sigma_{\epsilon}$
plays the role of
an exterior stress acting on the material. We suppose that
$$ \sigma_{\epsilon}(t,x):=\epsilon^{2s}\sigma(\epsilon^{1+2s}t,\epsilon x), $$
where~$\sigma$ is a bounded uniformly continuous function such that, 
for some~$\alpha\in(s,1)$ and~$M>0$, it holds 
\begin{equation}\begin{split}\label{sigma}
&\|\sigma_x\|_{L^{\infty}([0,+\infty)\times\R)}+\|\sigma_t\|_{L^{\infty}([0,+\infty)\times\R)} \leq M, \\
&|\sigma_x(t,x+h)-\sigma_x(t,x)|\leq M|h|^{\alpha}, \quad {\mbox{ for every }} x,h\in\R {\mbox{ and }} t\in[0,+\infty). 
\end{split}\end{equation}

The problem in \eqref{Pp}
arises in the classical
Peierls--Nabarro model for atomic
dislocation in crystals, see e.g.~\cite{Nab97}
and references therein. In this paper, our main focus
is on the fractional parameter range~$s\in(0,1/2)$,
which corresponds to a strongly nonlocal elastic term,
in which the energy contributions coming from far cannot
be neglected and, in fact, may become predominant. We refer to~\cite{GM12}
for the case~$s=1/2$ and to~\cite{DIPPV} for the case~$s\in(1/2,1)$.

We define
$$v_{\epsilon}(t,x):=v\left(\frac{t}{\epsilon^{1+2s}},\frac{x}{\epsilon}\right)$$
and we look at the equation satisfied by the rescaled function~$v_{\epsilon}$, that is, recalling \eqref{Pp},
\begin{eqnarray}\label{ACpar}
\left\{ 
\begin{array}{ll} 
(v_{\epsilon})_t=
\displaystyle\frac{1}{\epsilon}\biggl(L_s v_{\epsilon} - 
\displaystyle\frac{1}{\epsilon^{2s}}W'(v_{\epsilon})+\sigma(t,x)\biggr) \quad {\mbox{ in }}(0,+\infty)\times\R,\\
v_{\epsilon}(0,\cdot)=v_{\epsilon}^0 \quad {\mbox{ in }}\R. 
\end{array} 
\right.
\end{eqnarray} 
Following~\cite{PSV13, CSire}, we introduce the basic layer 
solution~$u\in C^{2,\alpha}(\R)$ (here~$\alpha=\alpha(s) \in(0,1)$), that is, the 
solution
of the problem
\begin{eqnarray}\label{AC}
\left\{ 
\begin{array}{ll} 
L_s u-W'(u)=0  \quad{\mbox{ in }}\R,\\
u'>0,\quad u(-\infty)=0, \quad u(0)=1/2, \quad u(+\infty)=1.
\end{array} 
\right.
\end{eqnarray} 
The name of layer solution is motivated by the fact that~$u$
approaches the limits~$0$ and~$1$ at~$\pm\infty$. More quantitatively,
there exists a constant~$C\geq1$ such that
\begin{equation}\label{phi}
|u(x)-H(x)|\leq C|x|^{-2s} \quad {\mbox{ and }} 
\quad |u'(x)|\leq C|x|^{-(1+2s)},
\end{equation}
where~$H$ is the Heaviside function, see Theorem~2 in~\cite{PSV13}.

As a preliminary result, we will prove a finer asymptotic
estimate on the decay of the layer solution:

\begin{theorem}\label{TH-decay}
Let $s \in (0,1/2)$.
There exist constants~$C>0$ and $\vartheta>2s$ such that
\begin{equation*}
\left|u(x)-H(x)+\frac{1}{2s\,W''(0)}\frac{x}{|x|^{1+2s}}\right|\leq\frac{C}{|x|^{\vartheta}
} \quad {\mbox{ for any }}x\in\R, \end{equation*}
with $\vartheta$ depending only on~$s$.
\end{theorem}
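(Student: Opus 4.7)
The plan is to exploit the equation $L_s u=W'(u)$ together with the decay bounds from~\eqref{phi}, by decomposing $u=H+(u-H)$ and isolating the leading contribution coming from $L_sH$. The inequality is automatic for bounded $|x|$, since $|x|^{-\vartheta}\ge|x|^{-2s}$ on $|x|\le1$ whenever $\vartheta\ge2s$, so the analysis reduces to $|x|$ large.

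Set $\mu:=W''(0)$; by~\eqref{Wass} and the $1$-periodicity of $W$, one has $W'(k)=0$ and $W''(k)=\mu$ for every $k\in\Z$. Since $H(x)\in\Z$ and $|(u-H)(x)|\le C|x|^{-2s}$ by~\eqref{phi}, a Taylor expansion of $W'$ centered at $H(x)$ (using $W\in C^{3,\alpha}$) yields
\[ W'(u(x)) \;=\; \mu\,\bigl(u(x)-H(x)\bigr)+O\bigl(|x|^{-4s}\bigr). \]
Independently, a direct computation of $L_sH(x)$ for $x\neq 0$ — using that $H(x+y)+H(x-y)-2H(x)$ vanishes on $|y|<|x|$ and equals $-1$ on $|y|>|x|$ — gives
\[ L_sH(x) \;=\; -\frac{1}{2s}\,\frac{x}{|x|^{1+2s}}. \]
Writing $L_s u=L_sH+L_s(u-H)$ in the original equation and solving for $u-H$ then produces
\[ u(x)-H(x)+\frac{1}{2s\mu}\,\frac{x}{|x|^{1+2s}} \;=\; \frac{1}{\mu}\,L_s(u-H)(x)+O\bigl(|x|^{-4s}\bigr). \]
Hence the theorem will follow, with $\vartheta:=4s>2s$, once one establishes the key estimate $L_s(u-H)(x)=O(|x|^{-4s})$ for $|x|$ large.

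To prove this last bound I would split the integral defining $L_s(u-H)(x)$ into a near piece $|y|<|x|/2$ and a far piece $|y|\ge|x|/2$. In the near piece, $u-H$ does not cross the jump of $H$ at $0$, so the symmetric second difference $(u-H)(x+y)+(u-H)(x-y)-2(u-H)(x)$ agrees with $\int_0^{y}\bigl[u'(x+t)-u'(x-t)\bigr]\,dt$; applying the bound $|u'(z)|\le C|z|^{-(1+2s)}$ from~\eqref{phi} on $z\in[x/2,3x/2]$ controls this by $C|y|/|x|^{1+2s}$, so the resulting integrand $|y|^{-2s}/|x|^{1+2s}$ is integrable near $y=0$ (since $2s<1$) and contributes $O(|x|^{-4s})$. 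In the far piece, the pointwise bound $|(u-H)(z)|\le C(1+|z|)^{-2s}$ is applied to each of the three summands in the numerator; after the change of variable $z=x\pm y$, subdividing according to the sign of $z$ and to whether $|z|\lesssim|x|$ or $|z|\gtrsim|x|$, each of the cross-terms $\int(1+|z|)^{-2s}|z-x|^{-(1+2s)}\,dz$ contributes $O(|x|^{-4s})$, again using $2s<1$ to secure convergence of the marginal integrals.

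The main obstacle is precisely this far-field estimate: the cruder bound $\|u-H\|_\infty\le C$ yields only $O(|x|^{-2s})$, which is useless here, so the full $|z|^{-2s}$ decay from~\eqref{phi} has to be exploited, and the various subregions have to be balanced carefully against one another. Once the key estimate is in place, the displayed identity closes the argument, and the case $x$ large negative is handled symmetrically, with $W''(1)=\mu$ replacing $W''(0)$.
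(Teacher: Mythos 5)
Your proposal is correct, but it takes a genuinely different route from the paper. The paper deduces Theorem~\ref{TH-decay} from the general linear decay result, Theorem~\ref{DECAY}, applied (following Proposition~7.2 of~\cite{DIPPV}) to the difference between $u$ and a model heteroclinic; this is why Sections~\ref{summation}--\ref{P:T} are needed (the integral decay of Proposition~\ref{PRTHM1} via test functions, then barriers and sliding via Corollary~\ref{ULCO}), and it produces the exponent $\vartheta$ of~\eqref{SGAMMA}, i.e.\ $4s$ for $s\le 1/6$ and $(1+2s)/2$ for $s\in(1/6,1/2)$. You instead argue directly on the equation: Taylor expansion of $W'$ at the integer $H(x)$ (legitimate, since $W'(k)=0$ and $W''(k)=W''(0)$ by~\eqref{Wass} and periodicity), the explicit identity $L_sH(x)=-\tfrac{1}{2s}\,x\,|x|^{-1-2s}$ for $x\ne0$, and the key kernel estimate $L_s(u-H)(x)=O(|x|^{-4s})$ using only~\eqref{phi}. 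Both halves of that estimate check out: the near piece gives $|x|^{-1-2s}\int_0^{|x|/2}y^{-2s}\,dy\sim|x|^{-4s}$, where $s<1/2$ is essential for integrability at the origin; in the far piece, the region $|z|\le|x|/2$ contributes $|x|^{-1-2s}\int_{|z|\le |x|/2}(1+|z|)^{-2s}\,dz\sim|x|^{-4s}$ (again using $2s<1$), while the remaining regions and the $-2(u-H)(x)$ term contribute $C|x|^{-2s}\int_{|y|\ge|x|/2}|y|^{-1-2s}\,dy\sim|x|^{-4s}$. So your argument closes with the explicit value $\vartheta=4s$, which for $s\in(1/6,1/2)$ is even better than the paper's, and it bypasses Theorem~\ref{DECAY} entirely; what the paper's heavier route buys is the general decay estimate of Theorem~\ref{DECAY} itself (of independent interest, for arbitrary decaying right-hand sides), and a scheme uniform with the cases $s=1/2$ and $s\in(1/2,1)$ of~\cite{GM12,DIPPV}, where your near-piece bound would break down because $y^{-2s}$ is no longer integrable at the origin. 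In a write-up, just state explicitly that the integrals defining $L_su$, $L_sH$ and $L_s(u-H)$ all converge absolutely at $x\ne0$ (so the splitting $L_su=L_sH+L_s(u-H)$ is legitimate), and that bounded $|x|$ is handled trivially, as you indicate.
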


To state our next result, we recall that the semi-continuous envelopes of~$u$ 
are defined as 
$$ u^*(t,x):=\limsup_{(t',x')\rightarrow(t,x)}u(t',x') $$
and 
$$ u_*(t,x):=\liminf_{(t',x')\rightarrow(t,x)}u(t',x'). $$ 
Moreover, given~$x_1^0<x_2^0<\ldots<x_N^0$, 
we consider the solution~$\big(x_i(t)\big)_{i=1,\ldots,N}$ to the system 
\begin{eqnarray}\label{ODE2}
\left\{ 
\begin{array}{ll} 
\dot{x_i} =\gamma\biggl(-\sigma(t,x_i)+
\displaystyle\sum_{j\neq i}
\displaystyle\frac{x_i-x_j}{2s\,|x_i-x_j|^{2s+1}}\biggr) {\mbox{ in }}(0,+\infty), \\[4ex]
x_i(0)=x_i^0, \\
\end{array} 
\right.
\end{eqnarray} 
where
\begin{equation}\label{gamma}
\gamma=\left(\int_{\R}(u')^2\right)^{\!-1}.
\end{equation}
For the existence and uniqueness of such solution see Section~8 in~\cite{FIM09}.
We consider as initial 
condition in \eqref{ACpar} the state obtained by superposing $N$ copies
of the transition layers, centered at $x_1^0,\dots,x_N^0$, that is
\begin{equation}\label{initial} 
v_{\epsilon}^0(x)=\frac{\epsilon^{2s}}{\beta}\sigma(0,x)+ 
\sum_{i=1}^Nu\left(\frac{x-x_i^0}{\epsilon}\right), \end{equation} where 
\begin{equation}\label{beta} \beta:=W''(0)>0. \end{equation}
The main result obtained in this framework is the following:

\begin{theorem}\label{TH}
Let $s\in (0,1/2)$, assume that~\eqref{Wass}, \eqref{sigma} and~\eqref{initial} hold, 
and let 
\begin{equation*}
v_0(t,x)=\sum_{i=1}^N H(x-x_i(t)), \end{equation*}
where~$H$ is the Heaviside function and~$(x_i(t))_{i=1,\ldots,N}$ is the solution to~\eqref{ODE2}.

Then, for every~$\epsilon>0$ there exists a unique viscosity solution~$v_{\epsilon}$ 
to~\eqref{ACpar}. Furthermore, as~$\epsilon\rightarrow0$,
the solution~$v_{\epsilon}$ exhibits
the following asymptotic behavior:
\begin{equation*}
\limsup_{{(t',x')\rightarrow(t,x)}\atop{\epsilon\rightarrow0}}
v_{\epsilon}(t',x')\leq(v_0)^*(t,x) \end{equation*}
and 
\begin{equation*}
\liminf_{{(t',x')\rightarrow(t,x)}\atop{\epsilon\rightarrow0}}v_{\epsilon}(t',x')\geq(v_0)_*(t,x) 
\end{equation*}
for any~$t\in[0,+\infty)$ and~$x\in\R$. 
\end{theorem}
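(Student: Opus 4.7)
The plan is to apply the half-relaxed limits method of Barles--Perthame, with explicit sub- and supersolutions built from the layer profile $u$ of \eqref{AC}. Existence and uniqueness of the viscosity solution $v_\epsilon$ to \eqref{ACpar} for fixed $\epsilon>0$ follow from Perron's method and the comparison principle for integro-differential parabolic equations. Denote by $\overline v$ and $\underline v$ the upper and lower half-relaxed limits of $v_\epsilon$ as $\epsilon\to 0$; the theorem asserts $\overline v\le (v_0)^*$ and $\underline v\ge (v_0)_*$, so it suffices to construct, for each arbitrary $\delta>0$ and every small enough $\epsilon$, a global sub- and a global supersolution to \eqref{ACpar} whose limits sandwich $v_0$ within $\delta$.

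The candidate barriers take the form
\[
 w_\epsilon^\pm(t,x) := \frac{\epsilon^{2s}}{\beta}\sigma(t,x) + \sum_{i=1}^N u\left(\frac{x-\xi_i^\pm(t)}{\epsilon}\right) \pm \delta,
\]
where $\xi_i^\pm(t)$ are the trajectories obtained from \eqref{ODE2} after perturbing the right-hand side by $\pm\mu$ for some small $\mu>0$; this perturbation produces the desired strict sign in the residual. The crucial analytic ingredient is Theorem \ref{TH-decay}, which, for $j\ne i$ and $x$ in a neighborhood of $\xi_i^\pm$, rewrites each far transition as
\[
 u\left(\frac{x-\xi_j^\pm}{\epsilon}\right) = H(x-\xi_j^\pm) - \frac{\epsilon^{2s}}{2s\,\beta}\frac{x-\xi_j^\pm}{|x-\xi_j^\pm|^{1+2s}} + O(\epsilon^\vartheta).
\]
Thus near $\xi_i^\pm$ the ansatz equals $u((x-\xi_i^\pm)/\epsilon)$ plus an integer plus an $\epsilon^{2s}$-scale correction encoding the two-body interaction and the stress.

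Substituting $w_\epsilon^+$ into \eqref{ACpar} and using the layer equation $L_su=W'(u)$, the leading $\epsilon^{-1-2s}$ terms cancel between the nonlocal part and the nonlinearity. A Taylor expansion of $W'$ about the background integer value, combined with periodicity, $W(k)=0$ and $W''(0)=\beta$, shows that the $\epsilon^{-1}$ correction of $W'(w_\epsilon^+)/\epsilon^{1+2s}$ absorbs both the stress contribution $\sigma(t,x)/\epsilon$ and the pairwise interaction terms generated in the previous display. The time derivative produces $-\epsilon^{-1}u'((x-\xi_i^+)/\epsilon)\,\dot\xi_i^+$. Collecting all $\epsilon^{-1}$ terms and testing against $u'$ (whose squared $L^2$ norm equals $\gamma^{-1}$ by \eqref{gamma}), the residual at the $i$-th dislocation becomes proportional to
\[
 \dot\xi_i^+ - \gamma\left(-\sigma(t,\xi_i^+) + \sum_{j\ne i}\frac{\xi_i^+-\xi_j^+}{2s\,|\xi_i^+-\xi_j^+|^{1+2s}}\right) + o(1),
\]
which equals $\mu + o(1) > 0$ by the very choice of $\xi_i^+$. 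Hence $w_\epsilon^+$ is a strict supersolution for small $\epsilon$, and $w_\epsilon^-$ a strict subsolution by the symmetric construction.

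At $t=0$ the barriers match the datum \eqref{initial} up to $\pm\delta$, so the comparison principle yields $w_\epsilon^-\le v_\epsilon\le w_\epsilon^+$. Taking the half-relaxed limits, using that $u((x-\xi)/\epsilon)$ converges to $H(x-\xi)$ in the appropriate semicontinuous sense, and letting first $\mu\to 0$ (continuous dependence for \eqref{ODE2}) and then $\delta\to 0$, gives the two inequalities of the theorem. The main obstacle is the residual computation above: the velocity $\dot\xi_i^+$, the stress, and the Riesz-type interaction all enter at the same order $\epsilon^{-1}$, and their balancing forces exactly the ODE \eqref{ODE2}. Two ingredients are indispensable here: the sharper decay of Theorem \ref{TH-decay} with $\vartheta>2s$ (the weaker bound $|x|^{-2s}$ of \eqref{phi} alone would produce an error of the same size as the main interaction term and destroy the matching), and the quadratic vanishing of $W$ at the integers, which both fixes the coupling constant $\beta=W''(0)$ and legitimizes the Taylor expansion of $W'$. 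Finally, the perturbed trajectories $\xi_i^\pm$ remain ordered and collision-free on any compact time interval by the well-posedness of \eqref{ODE2} established in \cite{FIM09}.
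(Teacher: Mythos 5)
Your overall strategy (perturbed ODE trajectories, barriers built from superposed layers, comparison principle, half-relaxed limits) is the same as the paper's, but your construction of the barriers has a genuine gap: the ansatz
\[
 w_\epsilon^\pm(t,x) = \frac{\epsilon^{2s}}{\beta}\sigma(t,x) + \sum_{i=1}^N u\!\left(\frac{x-\xi_i^\pm(t)}{\epsilon}\right) \pm \delta
\]
omits the corrector. The time derivative produces the term $-\epsilon^{-1}u'\big((x-\xi_i^\pm)/\epsilon\big)\dot\xi_i^\pm$, which after multiplying the equation by $\epsilon$ is of order one and is \emph{concentrated and spatially varying} near the core of the $i$-th layer, where $W''(u)$ is far from $W''(0)$. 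You propose to close the argument by ``testing against $u'$'' and using $\int(u')^2=\gamma^{-1}$, but a sub/supersolution property for the viscosity comparison argument must hold \emph{pointwise}; projecting the residual onto $u'$ only controls its average and says nothing about its sign at each $(t,x)$. This is precisely why the paper (following \cite{GM12,DIPPV}) must first solve the corrector equation \eqref{eq_correttore}, $L_s\psi - W''(u)\psi = u' + \eta\,(W''(u)-W''(0))$, prove its regularity and decay (Theorem \ref{THcorrettore}, which in the range $s\in(0,1/2)$ requires the new De Giorgi-type $L^\infty$ estimate of Section \ref{L infty}, since the Morrey--Sobolev embedding is unavailable), and then include the term $-\epsilon^{2s}\,\overline c_i(t)\,\psi\big((x-\overline x_i(t))/\epsilon\big)$ in the ansatz \eqref{supsol}: it is exactly this term that cancels, pointwise, the $u'\dot\xi_i$ contribution together with the $\big(W''(u)-W''(0)\big)$ mismatch. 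Without it, the residual $I_\epsilon$ of \eqref{Ieps} has no sign and Proposition \ref{y7} cannot be reproduced.

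A secondary but real problem is the vertical shift $\pm\delta$ at order one. Since $W'$ is periodic and not monotone, $\epsilon^{-2s}\big(W'(w\pm\delta)-W'(w)\big)\approx \epsilon^{-2s}W''(u)\,(\pm\delta)$ has no favorable sign across the transition region (indeed $W''$ changes sign on $(0,1)$), and the divergent factor $\epsilon^{-2s}$ makes this fatal. The paper introduces the parameter $\delta$ differently: it enters at order $\epsilon^{2s}$ through $\tilde\sigma=(\delta+\sigma)/\beta$ in \eqref{tildesigma}--\eqref{supsol}, and through the perturbed forcing and shifted initial data of the ODE system \eqref{ODEdelta}; the sandwiching of $v_0$ is then achieved by the $\delta$-perturbed trajectories, not by an $O(1)$ vertical translation. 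Your use of Theorem \ref{TH-decay} with $\vartheta>2s$ to expand the far layers, and the final limiting procedure, are in line with the paper; what is missing is the corrector mechanism and the pointwise (case-by-case, near/far from the dislocations, as in Proposition \ref{y7}) verification of the differential inequality.
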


When $s=1/2$ the result 
above was proved in~\cite{GM12},
where it was also raised the question about
what happens for other values of the parameter~$s$.

In~\cite{DIPPV}, the result was extended to the case~$s\in(1/2,1)$.
So the main purpose of this paper was to obtain the result for
the remaining range of~$s\in(0,1/2)$. {F}rom the physical point of
view, this range of parameters is important since it corresponds to
the case of a strong nonlocal elastic effect: notice indeed that
the lower the value of~$s$ the stronger become the energy contributions
coming from far. We refer to~\cite{GM12, DIPPV} for a more exhaustive
set of physical motivations and heuristic asymptotics of the model we study.

We also remark that, differently from~\cite{GM12},
we do not make use of any harmonic extension results,
that are specific for the fractional powers of the Laplacian,
and so our proof is feasible for more general types of integro-differential
equations.

The cornerstone to prove Theorem~\ref{TH-decay}
(and hence Theorem~\ref{TH}) is given by the following
decay estimate at infinity, which we think has also independent interest:

\begin{theorem}\label{DECAY}
Let $s \in (0,1/2)$, and let~$v\in L^{\infty}(\R)\cap C^2(\R)$ such that 
\begin{equation} \label{va0}
\lim_{x\rightarrow\pm\infty}v(x)=0.\end{equation}
Suppose that there exists a function~$c\in L^{\infty}(\R)$ such that 
$c(x)\geq\delta>0$ for any~$x\in\R$ and for some~$\delta>0$, and 
\begin{equation}\label{eqM}
-L_s v +cv = g,
\end{equation}
where~$g$ is a function that satisfies the following estimate
\begin{equation}\label{Mest}
|g(x)|\leq\frac{C}{1+|x|^{4s}} \quad {\mbox{ for any }}x\in\R, 
\end{equation}
for some constant~$C\geq0$. 

Then, there exist $\vartheta\in(2s,1+2s]$ depending only on $s$, and
a constant~$\overline C\geq0$ depending on~$C$, $\delta$, $\|c\|_{L^\infty(\R)}$, and~$s$, 
such that 
$$ |v(x)|\leq\frac{\overline C}{1+|x|^{\vartheta}} \quad {\mbox{ for any }}x\in\R. $$
\end{theorem}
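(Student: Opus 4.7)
The plan is to construct an explicit polynomial barrier and then conclude by comparison. Set $\vartheta := 4s$, which lies in $(2s,1+2s)$ precisely because $s\in(0,1/2)$, and take $\psi(x):=(1+x^2)^{-\vartheta/2}$. It suffices to find a constant $A>0$, depending only on $C$, $\delta$, $\|c\|_{L^\infty}$ and $s$, such that $A\psi\geq|v|$ pointwise on $\R$; since $\psi\leq 2^{\vartheta/2}(1+|x|^\vartheta)^{-1}$ this yields the claim. Because $-v$ solves the same equation with $-g$ in place of $g$, it is enough to produce a $v$-independent $A$ such that $A\psi$ is a supersolution of $-L_sw+cw\geq|g|$ outside a large ball and dominates $v$ on that ball.

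The technical heart of the argument is the asymptotic decay
$$
|L_s\psi(x)|\leq C_\star\,(1+|x|)^{-\vartheta-\mu}\qquad\text{for all }x\in\R,
$$
for some $\mu=\mu(s)>0$ and $C_\star=C_\star(s)$. I would obtain this by splitting the defining integral into the near-field $\{|y|\leq|x|/2\}$ and the far-field $\{|y|>|x|/2\}$. In the near-field a second-order Taylor expansion together with the pointwise bound $|\psi''(\xi)|\lesssim(1+|\xi|)^{-\vartheta-2}$ on the ball of radius $|x|/2$ around $x$ gives a contribution of order $(1+|x|)^{-\vartheta-2}\int_0^{|x|/2}y^{1-2s}\,dy\sim(1+|x|)^{-\vartheta-2s}$. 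In the far-field the piece $\psi(x)\int_{|y|>|x|/2}|y|^{-1-2s}dy$ already yields the same $(1+|x|)^{-\vartheta-2s}$ rate, while for $\psi(x\pm y)$ a change of variable $z=x\pm y$, combined with the integrability (or sub-polynomial growth) of the primitives of $\psi$, produces a rate at worst $(1+|x|)^{-1-2s}$. The restriction $\vartheta<1+2s$ is exactly what ensures $\min(\vartheta+2s,1+2s)>\vartheta$, i.e.\ $\mu>0$; this is the step I expect to require the most care.

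With this decay bound in hand, for every $|x|\geq R_0$ with $R_0$ depending only on $C_\star,\delta,s$,
$$
-L_s(A\psi)(x)+c(x)\,A\psi(x)\;\geq\;A\delta\,\psi(x)-A\,C_\star\,(1+|x|)^{-\vartheta-\mu}\;\geq\;\tfrac{A\delta}{2}\,(1+|x|)^{-\vartheta},
$$
and since $\vartheta=4s$ this majorizes $C(1+|x|)^{-4s}\geq|g(x)|$ as soon as $A\geq 2C/\delta$. Enlarging $A$ further so that $A\psi\geq\|v\|_{L^\infty(\R)}$ throughout $\overline{B_{R_0}}$ (which is possible, noting that $\|v\|_{L^\infty}$ is itself controlled by $C/\delta$ via a direct maximum-principle argument for $-L_s+c$), the function $w:=A\psi-v$ satisfies $w\geq 0$ on $\overline{B_{R_0}}$ and $-L_sw+cw\geq 0$ on $\R\setminus\overline{B_{R_0}}$. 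Since $w\to 0$ at infinity by \eqref{va0}, a negative infimum of $w$ would be attained at some $x_0\in\R\setminus\overline{B_{R_0}}$, where the defining integrand of $L_sw(x_0)$ is pointwise nonnegative, so $-L_sw(x_0)+c(x_0)w(x_0)<0$, a contradiction. Hence $v\leq A\psi$, and the symmetric argument applied to $-v$ gives $|v|\leq A\psi$, which is the stated estimate.
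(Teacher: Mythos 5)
Your argument is correct, and it takes a genuinely different route from the paper. The paper deliberately avoids a direct barrier computation: it first proves an averaged decay estimate $\int_{B_1(x)}|v|\le C_*|x|^{-\vartheta}$ by an energy/test-function argument (Proposition~\ref{PRTHM1}), and then converts this into a pointwise bound by sliding the barrier and estimating $L_s$ at the touching point, where the averaged bound is used to control the measure of the set on which $|v|$ is comparable to the barrier (Lemmata~\ref{Con-L1}, \ref{Con-L2} and Corollary~\ref{ULCO}); this yields the exponent $\vartheta=\min\{4s,(1+2s)/2\}$. You instead exploit that the needed rate $4s$ is strictly subcritical, $4s<1+2s$, so the explicit profile $\psi(x)=(1+x^2)^{-2s}$ satisfies $|L_s\psi(x)|\le C_\star(1+|x|)^{-4s-\mu}$ with $\mu=\min\{2s,1-2s\}>0$ (up to a harmless logarithmic factor at $s=1/4$), and then a standard nonlocal comparison at a global negative minimum of $A\psi-v$, together with the bound $\|v\|_{L^\infty(\R)}\le C/\delta$ from the maximum principle (so that $A$, hence $\overline C$, depends only on the admissible quantities), finishes the proof. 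One small slip in your far-field accounting: the term $\int_{|y|>|x|/2}\psi(x+y)\,|y|^{-1-2s}\,dy$ is of size $(1+|x|)^{-1-2s}\int_{|z|\le |x|/2}\psi(z)\,dz$, which for $s\le 1/4$ (where $\psi\notin L^1(\R)$) is of order $(1+|x|)^{-6s}$ rather than $(1+|x|)^{-1-2s}$; this is harmless, since $6s=\vartheta+2s$ and the total rate is indeed $\min\{\vartheta+2s,\,1+2s\}>\vartheta$, exactly as your final display requires. As for what each approach buys: yours is much shorter and gives the explicit exponent $\vartheta=4s$ on all of $(0,1/2)$, which is at least as good as the paper's and strictly better for $s\in(1/6,1/2)$; the paper's machinery is heavier but more robust, as it does not rely on the algebra of a specific comparison function, is carried out in $\R^n$ for every $s\in(0,1)$, and its touching-point estimates are designed as a substitute for a weak Harnack inequality, which is the kind of tool one would need to push toward decay rates at or beyond the critical order $1+2s$, where barriers of the form $(1+|x|)^{-\beta}$ cease to be supersolutions.
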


In our setting, we will use Theorem \ref{DECAY}
in the proof of
Theorem~\ref{TH-decay} (there, the function $v$
in the statement of Theorem \ref{DECAY}
will be embodied by the difference between
the solution~$u$ of problem \eqref{AC}
and a suitable heteroclinic solution of a model
problem, so that in this case condition \eqref{va0}
is automatically satisfied).

The explicit value of
the exponent~$\vartheta$ that appears
in the statement of Theorem \ref{DECAY}
will be given in formula~\eqref{SGAMMA}, but
such explicit value will not play any role in this paper (the only relevant
feature for us is that~$\vartheta>2s$). 
We think that it is an interesting open problem to determine
the optimal value of the exponent~$\vartheta$ in a general setting.

Theorem~\ref{DECAY}
may be seen as the strongly nonlocal version of Corollary~5.13
in~\cite{GM12} and
Corollary~7.1 in~\cite{DIPPV}, where similar decay estimates
(with different exponents) where obtained when~$s=1/2$ and~$s\in(1/2,1)$,
respectively. However, the techniques in~\cite{GM12, DIPPV}
are not sufficient to obtain the desired decay estimates
when~$s\in(0,1/2)$,
so the proof of Theorem~\ref{DECAY} here will rely on completely
different methods. Roughly speaking,
we use suitable test functions in order to obtain an integral
decay estimates (this will be accomplished in Proposition~\ref{PRTHM1})
and then we use barriers and sliding arguments to infer from it
a pointwise estimate. Remarkably, differently from the classical case
where pointwise estimates follow from integral ones using a suitable
version of the weak Harnack inequality (see e.g. Theorem~4.8(2)
in~\cite{CAFCAB}),
in our case, to the best of our knowledge, the fractional analog of this
weak Harnack inequality is not known. To overcome this difficulty,
some careful estimates on the fractional Laplacian of a function
below a barrier are employed (these estimates
will be obtained in Corollary~\ref{ULCO}).

The rest of the paper is organized as follows.
The proof of Theorem~\ref{DECAY} is contained in 
Sections~\ref{summation}--\ref{P:T}.
More precisely, we collect some preliminary elementary estimates
in Section~\ref{summation}. Then, in Sections~\ref{IEAP} and~\ref{ICAI},
we estimate the fractional Laplacian of a function
below a barrier
by taking into account the contribution in a neighborhood of
a given point and the contribution coming from infinity. An
integral decay estimate is given in Section~\ref{D:S}
and the proof of Theorem~\ref{DECAY} is completed in Section~\ref{P:T}.

With this we have the basic technical tools to prove 
Theorem~\ref{TH-decay} 
in Section~\ref{E:TH}. Then, Sections~\ref{L infty}--\ref{csiufff}
are devoted to the proof of Theorem~\ref{TH}. Namely,
Section~\ref{L infty} collects some uniform bounds that are
used in Section~\ref{7sddd} to construct the solution
of a corrector equation and prove its regularity.
With this, the proof of Theorem~\ref{TH} is
completed in Section~\ref{csiufff}.
 
\section{An auxiliary summation lemma}\label{summation}

Here we present some technical summation estimates, to be used in the
forthcoming Section~\ref{ICAI}. 
For the sake of generality, we prove the results in Sections \ref{summation}-\ref{D:S} in $\R^n$, for any $s \in (0,1)$ and $n\ge 1$. 

\begin{lemma}\label{pl1}
Let~$s \in (0,1)$, $x_0\in\R^n$ such that~$|x_0|\ge 3$, and $\vartheta\in(0,n+2s]$.
Then
$$ \sum_{{k\in \Z^n \setminus\{0\}}\atop{|x_0+k|\le |x_0|/2}}
\frac{1}{|k|^{n+2s}\,(1+|x_0+k|)^\vartheta}\le 
\frac{C}{(1+|x_0|)^\vartheta},$$
for some $C>0$ depending on $n$, $s$ and $\vartheta$. 
\end{lemma}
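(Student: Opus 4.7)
The plan is to separate the two factors in the summand. The constraint $|x_0+k|\leq|x_0|/2$ forces $k$ to lie close to $-x_0$, which lets us pull a large decay factor out of $|k|^{-(n+2s)}$; what remains is a sum of $(1+|y|)^{-\vartheta}$ over shifted lattice points $y=x_0+k$ in the ball of radius $|x_0|/2$, which I will control by a dyadic counting argument.

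First, by the reverse triangle inequality, every $k$ appearing in the sum satisfies $|k|\geq|x_0|-|x_0+k|\geq|x_0|/2$, so $|k|^{n+2s}\geq(|x_0|/2)^{n+2s}$. Setting $y:=x_0+k$, the whole sum is therefore bounded above by
\[
\frac{2^{n+2s}}{|x_0|^{n+2s}}\sum_{\substack{y\in x_0+\Z^n\\ |y|\leq|x_0|/2}}\frac{1}{(1+|y|)^{\vartheta}}.
\]
Note that the excluded index $k=0$, i.e.\ $y=x_0$, automatically lies outside the ball since $|x_0|\geq 3>|x_0|/2$.

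Second, the points of the shifted lattice $x_0+\Z^n$ are $1$-separated, so any ball of radius $R\geq 1$ contains at most $C(1+R)^n$ of them. Decomposing $\{|y|\leq|x_0|/2\}$ into the unit ball together with the dyadic annuli $\{2^j\leq|y|<2^{j+1}\}$ for $j=0,\dots,J$ with $J\sim\log_2|x_0|$, and using $(1+|y|)^{-\vartheta}\leq 2^{-j\vartheta}$ on the $j$-th annulus, I bound the shifted-lattice sum by
\[
C\Bigl(1+\sum_{j=0}^{J}2^{j(n-\vartheta)}\Bigr).
\]

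Finally, I evaluate the geometric sum according to the sign of $n-\vartheta$ and multiply by the prefactor $|x_0|^{-(n+2s)}$. If $\vartheta<n$, the geometric sum is $\leq C|x_0|^{n-\vartheta}$, producing a total bound $C|x_0|^{-\vartheta-2s}$; if $\vartheta=n$, the sum is $O(\log|x_0|)$, absorbed by the gain $|x_0|^{-2s}$; and if $\vartheta>n$, the sum is $O(1)$, yielding $C|x_0|^{-(n+2s)}\leq C|x_0|^{-\vartheta}$ precisely because of the hypothesis $\vartheta\leq n+2s$. In every case the bound is at most $C(1+|x_0|)^{-\vartheta}$, using $|x_0|\geq 3$ to replace $|x_0|$ by $1+|x_0|$. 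The lemma poses no genuine obstacle; the only minor subtlety is the logarithm arising in the critical case $\vartheta=n$, which is harmlessly absorbed by the factor $|x_0|^{-2s}$ coming from the strict positivity of $s$.
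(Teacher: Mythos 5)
Your proof is correct and takes essentially the same route as the paper: both arguments first use $|x_0+k|\le|x_0|/2\Rightarrow|k|\ge|x_0|/2$ to extract the factor $2^{n+2s}|x_0|^{-(n+2s)}$, and then bound the remaining sum by the mass of $(1+|y|)^{-\vartheta}$ over a ball of radius comparable to $|x_0|$, concluding with the same three-case exponent comparison ($\vartheta<n$, $\vartheta=n$ with a harmless logarithm, $\vartheta>n$ using $\vartheta\le n+2s$). The only, immaterial, difference is that the paper controls that inner sum by comparison with the integral $\int_0^{|x_0|}\rho^{n-1}(1+\rho)^{-\vartheta}\,d\rho$, whereas you use dyadic annuli together with a $1$-separated lattice-point count.
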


\begin{proof} If $|x_0+k|\le |x_0|/2$ then $|k|\ge |x_0|-|x_0+k|\ge
|x_0|/2$, therefore
\begin{equation}\label{9e1}
\sum_{{k\in \Z^n \setminus\{0\}}\atop{|x_0+k|\le |x_0|/2}}
\frac{1}{|k|^{n+2s}\,(1+|x_0+k|)^\vartheta}\le
\frac{2^{n+2s}}{|x_0|^{n+2s}}
\sum_{{k\in \Z^n \setminus\{0\}}\atop{|x_0+k|\le |x_0|/2}}
\frac{1}{(1+|x_0+k|)^\vartheta}.
\end{equation}
Moreover,
$$ \int_1^{|x_0|}
\frac{\rho^{n-1}\,d\rho}{\rho^\vartheta}= Z(n,\vartheta, x_0), $$ 
where 
$$ Z(n,\vartheta,x_0):=
\left\{ \begin{matrix}
(n-\vartheta)^{-1} (|x_0|^{n-\vartheta}-1) & {\mbox{ if }} n>\vartheta, 
\\
\log |x_0| & {\mbox{ if }} n=\vartheta,\\
(\vartheta-n)^{-1} (1-|x_0|^{n-\vartheta}) & {\mbox{ if }} n<\vartheta.
\end{matrix}
\right.$$
In any case
\begin{equation}\label{zeta}
\frac{Z(n,\vartheta,x_0)}{|x_0|^{n+2s}}\le\frac{c_{n,\vartheta}}{|x_0|^\vartheta},
\end{equation}
for some constant $c_{n,\vartheta}>0$ only depending on $n$ and $\vartheta$. Therefore
\begin{eqnarray*}
\sum_{{k\in \Z^n \setminus\{0\}}\atop{|x_0+k|\le |x_0|/2}}
\frac{1}{(1+|x_0+k|)^\vartheta} &\le& \int_{B_{|x_0|}(-x_0)} 
\frac{dx}{(1+|x+x_0|)^\vartheta} \\
&=& \omega_{n-1} \int_0^{|x_0|} 
\frac{\rho^{n-1}\,d\rho}{(1+\rho)^\vartheta}
\\ &\le& \omega_{n-1} \left[\int_0^{1} 
\rho^{n-1}\,d\rho
+\int_1^{|x_0|} 
\frac{\rho^{n-1}\,d\rho}{\rho^\vartheta}\right]
\\ &=& \omega_{n-1} \left[\frac1n
+ Z(n,\vartheta,x_0)
\right].\end{eqnarray*}
This and~\eqref{9e1} give that
$$
\sum_{{k\in \Z^n \setminus\{0\}}\atop{|x_0+k|\le |x_0|/2}}
\frac{1}{|k|^{n+2s}\,(1+|x_0+k|)^\vartheta} \le
\frac{C_1\left(1+Z(n,\vartheta,x_0)\right)}{|x_0|^{\vartheta}}, 
$$
for some $C_1>0$. Then, the desired result follows from \eqref{zeta}.
\end{proof}

\begin{corollary}\label{pc1.1}
Let~$s \in (0,1)$, $x_0\in\R^n$ such that~$|x_0|\ge 3$, and $\vartheta\in(0,n+2s]$.
Then
$$ \sum_{k\in \Z^n \setminus\{0\}}
\frac{1}{|k|^{n+2s}\,(1+|x_0+k|)^\vartheta}\le
\frac{C}{(1+|x_0|)^\vartheta},$$
for some $C>0$ depending on $n$, $s$ and $\vartheta$.
\end{corollary}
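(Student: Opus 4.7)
The plan is to split the sum according to whether $|x_0+k|$ is small or large relative to $|x_0|$, and to invoke Lemma~\ref{pl1} on the former piece while handling the latter piece by a direct estimate.

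More precisely, I would write
\begin{equation*}
\sum_{k\in\Z^n\setminus\{0\}}\frac{1}{|k|^{n+2s}\,(1+|x_0+k|)^\vartheta}
=S_1+S_2,
\end{equation*}
where $S_1$ is the sub-sum over $\{k\in\Z^n\setminus\{0\}:|x_0+k|\le|x_0|/2\}$ and $S_2$ is the sub-sum over $\{k\in\Z^n\setminus\{0\}:|x_0+k|>|x_0|/2\}$. Lemma~\ref{pl1} gives directly
\begin{equation*}
S_1\le \frac{C}{(1+|x_0|)^\vartheta},
\end{equation*}
so only $S_2$ remains.

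For $S_2$, note that when $|x_0+k|>|x_0|/2$ and $|x_0|\ge 3$ one has
\begin{equation*}
1+|x_0+k|> 1+\tfrac{|x_0|}{2}\ge \tfrac{1}{2}(1+|x_0|),
\end{equation*}
so that $(1+|x_0+k|)^{-\vartheta}\le 2^\vartheta(1+|x_0|)^{-\vartheta}$. Plugging this in and discarding the constraint on $k$ gives
\begin{equation*}
S_2\le \frac{2^\vartheta}{(1+|x_0|)^\vartheta}\sum_{k\in\Z^n\setminus\{0\}}\frac{1}{|k|^{n+2s}}.
\end{equation*}
The remaining series converges because $n+2s>n$ (comparing with $\int_1^\infty \rho^{n-1-n-2s}\,d\rho$, as in the proof of the preceding lemma), yielding $S_2\le C(1+|x_0|)^{-\vartheta}$.

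Summing the bounds on $S_1$ and $S_2$ gives the claim. No genuine obstacle is expected here: the previous lemma already encapsulated the delicate region where $k$ nearly cancels $x_0$, and the complementary region is handled by the elementary observation that $|x_0+k|$ is comparable to or larger than $|x_0|$, together with the absolute convergence of $\sum |k|^{-n-2s}$.
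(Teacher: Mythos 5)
Your proposal is correct and coincides with the paper's own argument: the paper also splits at $|x_0+k|\ge|x_0|/2$, bounds the far piece by $(1+|x_0|/2)^{-\vartheta}\sum_{k\ne 0}|k|^{-n-2s}$, and invokes Lemma~\ref{pl1} for the near piece. No issues.
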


\begin{proof} Notice that
$$ \sum_{{k\in \Z^n \setminus\{0\}}\atop{|x_0+k|\ge |x_0|/2}}
\frac{1}{|k|^{n+2s}\,(1+|x_0+k|)^\vartheta}\le
\frac{1}{(1+|x_0|/2)^\vartheta}\sum_{k\in \Z^n 
\setminus\{0\}}
\frac{1}{|k|^{n+2s}}\le\frac{C_0}{(1+|x_0|)^\vartheta},$$
for some~$C_0>0$, and so the result follows from Lemma~\ref{pl1}.
\end{proof}

\section{Fractional Laplace computations I -- Integral estimates
at a point}\label{IEAP}

Here we estimate
the local contribution of the fractional Laplacian
of a function touched by above by a polynomial barrier.
By local, we mean here the contribution coming from a neighborhood 
of a given point. The contribution coming from far
will then be studied in Section~\ref{ICAI}.

Though the main focus of this paper is the fractional parameter
range~$s\in (0,1/2)$ the results presented hold true for any~$s\in(0,1)$.
For this, it is convenient to recall the notation
on singular integrals in the principal value sense, that is
$$ \PV
\int_{\R^n} \frac{u(x+y)-u(x)}{|y|^{n+2s}}\,dy
:= \lim_{\rho\searrow0}
\int_{\R^n\setminus B_\rho} \frac{u(x+y)-u(x)}{|y|^{n+2s}}\,dy.$$
As a matter of fact, when~$s\in(0,1/2)$ the above notation may be dropped
since the integrand is indeed Lebesgue summable
and no cancellations are needed to make the integral convergent
near the origin.

With this notation, we can estimate the
contribution in a given ball according to the following result:

\begin{lemma}\label{Con-L1}
Let~$s \in (0,1)$, $\vartheta>0$, $\epsilon\in(0,1)$, and
$$ F_1(x):=\frac{1}{(1+|x|)^\vartheta}.$$
For any fixed~$M>0$ let~$F_M (x):=M F_1(x)$.
Suppose that $u\in L^\infty(\R^n)\cap C^2(\R^n)$ satisfies
\begin{eqnarray}
&& F_M(x_0)+\epsilon=u(x_0) \ {\mbox{ for some point }} x_0\in\R^n, \label{S1}\\
&& F_M (x)+\epsilon\geq u(x) \ {\mbox{ for every }} x\in\R^n \label{S2}, \\
&& \int_{B_1(x_0)} |u(\zeta)|\,d\zeta \leq 
\frac{C_0}{(1+|x_0|)^{\vartheta}}        
\label{S3} \end{eqnarray}
for some~$C_0>0$.

Then there exists~$M_0>0$, depending only on~$n$, $s$, 
$\|u\|_{L^\infty(\R^n)}$, $\vartheta$, and~$C_0$,
such that if~$M\ge M_0$ then
$$ \PV\int_{B_1} \frac{u(x_0+y)-u(x_0)}{|y|^{n+2s}}\,dy \le 
-\frac{M\,|B_1|}{10\,(1+|x_0|)^{\vartheta}}.$$
\end{lemma}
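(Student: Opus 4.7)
The plan is to split the integral over $B_1$ into the small ball $B_\delta$, which carries the singularity of the kernel $|y|^{-n-2s}$, and the annulus $A_\delta:=B_1\setminus B_\delta$, for some small $\delta\in(0,1)$ to be fixed universally at the end in terms of $n,s,\vartheta$. The annular piece is where the estimate is produced: on $A_\delta$ the weight is bounded, the hypothesis \eqref{S3} controls $u(x_0+y)$ in $L^1$, and the touching condition \eqref{S1} forces $u(x_0)\ge F_M(x_0)=M/(1+|x_0|)^\vartheta$, so that the combination yields a very negative contribution of order $-M/(1+|x_0|)^\vartheta$. The $B_\delta$ piece is handled as a principal value using the $C^2$ regularity of $u$, and via the barrier inequality \eqref{S2} is bounded by the symmetric second difference of $F_M$, which will turn out to be a lower-order correction.

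For the annular piece, bounding $u(x_0+y)\le |u(x_0+y)|$ and using \eqref{S3} gives
\[
\int_{A_\delta}\frac{u(x_0+y)-u(x_0)}{|y|^{n+2s}}\,dy
\le \frac{1}{\delta^{n+2s}}\int_{B_1(x_0)}|u(z)|\,dz
-u(x_0)\int_{A_\delta}\frac{dy}{|y|^{n+2s}}
\le \frac{C_0\delta^{-n-2s}-c_{n,s}\delta^{-2s}M}{(1+|x_0|)^{\vartheta}},
\]
where $c_{n,s}>0$ comes from the explicit evaluation of $\int_{A_\delta}|y|^{-n-2s}\,dy$ (with $\delta$ small so that the additive $-1$ can be absorbed). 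Taking $M_0$ large in terms of $C_0,\delta,n,s$ ensures that for $M\ge M_0$ this is at most $-\tfrac{c_{n,s}}{2}\delta^{-2s}M/(1+|x_0|)^\vartheta$. Fixing then $\delta$ universally small so that $\tfrac{c_{n,s}}{4}\delta^{-2s}\ge |B_1|/10$ reduces the proof to showing that the $B_\delta$ contribution does not exceed $\tfrac{c_{n,s}}{4}\delta^{-2s}M/(1+|x_0|)^\vartheta$.

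For the $B_\delta$ piece, rewriting it in the symmetric form
\[
\PV\int_{B_\delta}\frac{u(x_0+y)-u(x_0)}{|y|^{n+2s}}\,dy
= \tfrac12\int_{B_\delta}\frac{u(x_0+y)+u(x_0-y)-2u(x_0)}{|y|^{n+2s}}\,dy,
\]
the combination of \eqref{S1} and \eqref{S2} yields the pointwise inequality
\[
u(x_0+y)+u(x_0-y)-2u(x_0)\le F_M(x_0+y)+F_M(x_0-y)-2F_M(x_0).
\]
When $|x_0|\ge 2\delta$ the ball $B_\delta(x_0)$ avoids the origin, $F_M$ is smooth there, and Taylor's theorem bounds the right-hand side by $CM(1+|x_0|)^{-\vartheta-2}|y|^2$, producing an $O(M\delta^{2-2s}/(1+|x_0|)^{\vartheta+2})$ contribution which is negligible compared to $M\delta^{-2s}/(1+|x_0|)^\vartheta$. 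When $|x_0|<2\delta$, the factor $(1+|x_0|)^\vartheta$ is a universal constant and the global Lipschitz estimate $|\nabla F_M|\le M\vartheta$ gives an $O(M\delta^{1-2s})$ contribution, again dominated by $c_{n,s}M\delta^{-2s}$ for $\delta$ small. The main obstacle is precisely this handling of the inner piece: $F_M$ fails to be $C^2$ at the origin, so the case split on $|x_0|$ versus $\delta$ is the cleanest way to use the best available estimate of $F_M$ in each regime. Summing the two contributions and choosing $\delta$ first, then $M_0$, completes the proof.
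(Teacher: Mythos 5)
Your decomposition is genuinely different from the paper's, and for $s\in(0,1/2)$ it is correct. The paper does not split $B_1$ radially: after reducing to $|x_0|>3$ (using \eqref{S1} and the $L^\infty$ bound on $u$ to rule out small $|x_0|$ once $M$ is large), it splits $B_1$ according to the size of $|u(x_0+y)|$, uses \eqref{S3} and a Chebyshev-type argument to show that the set where $|u(x_0+y)|\ge M/(2(1+|x_0|)^\vartheta)$ has measure $O(1/M)$, extracts the negative term from the complementary set (of measure at least $\tfrac{9}{10}|B_1|$), and controls the exceptional set by a Taylor expansion of the barrier at the scale $r_0=((1+|x_0|)^2/M)^{1/(n+2)}$. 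Your route instead produces the negative term from the annulus, playing the touching value $u(x_0)\ge M(1+|x_0|)^{-\vartheta}$ against the kernel mass $c_{n,s}\delta^{-2s}$ and the $L^1$ bound \eqref{S3}; the quantifier order ($\delta=\delta(n,s,\vartheta)$ first, then $M_0$ depending on $n,s,\vartheta,C_0,\delta$) is coherent, and for $s<1/2$ you do not even need the $\|u\|_{L^\infty}$ dependence that the statement allows. One bookkeeping point: for $n\ge 2$ the Hessian of $F_1$ contains a term of size $\vartheta(1+|\xi|)^{-\vartheta-1}|\xi|^{-1}$, so when $|x_0|$ is comparable to $2\delta$ the constant in your bound $CM(1+|x_0|)^{-\vartheta-2}|y|^2$ is really of order $1+\delta^{-1}$; this costs only a factor $\delta^{-1}$ against a margin of order $\delta^{2}$, so the argument still closes, but it should be stated.

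There is, however, a genuine gap for $s\in[1/2,1)$, which the statement includes (the lemma is asserted for all $s\in(0,1)$, even though the paper only applies it with $n=1$ and $s\in(0,1/2)$). In your case $|x_0|<2\delta$ you estimate the symmetric second difference of $F_M$ by the global Lipschitz bound, which leads to $\int_{B_\delta}|y|^{1-n-2s}\,dy$; this integral diverges when $s\ge 1/2$, so the claimed $O(M\delta^{1-2s})$ contribution is simply not available there ($F_1$ is Lipschitz but not $C^{1,1}$ across its corner at the origin, and the corner may lie inside $B_\delta(x_0)$). The simplest repair is exactly the paper's opening reduction: if $|x_0|\le 2\delta\le 1$, then \eqref{S1} gives $M=(1+|x_0|)^\vartheta\big(u(x_0)-\epsilon\big)\le 2^\vartheta\|u\|_{L^\infty(\R^n)}$, so choosing $M_0>2^\vartheta\|u\|_{L^\infty(\R^n)}$ (a dependence the statement permits) makes this case vacuous, and with that modification your argument covers the full range $s\in(0,1)$.
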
 

\begin{proof} First of all we observe that, without loss
of generality, we can suppose that
\begin{equation}\label{x0ma}
|x_0|>3.
\end{equation}
Indeed, if~$|x_0|\le 3$ we deduce from~\eqref{S1} that
$$ \frac{M}{4^\vartheta}\le
\frac{M}{(1+|x_0|)^\vartheta}=
F_M(x_0)=u(x_0)-\epsilon \le \|u\|_{L^\infty(\R^n)}$$
that gives an upper bound on~$M$ which would be
violated by choosing~$M_0$ large enough.

{F}rom~\eqref{x0ma}, we have that
\begin{equation}\label{yma}
{\mbox{for any $y\in B_1$, $|x_0+y|\ge |x_0|-|y|\ge |x_0|/2$.}}
\end{equation}
Now we define
\begin{eqnarray*}
D_1 &:=& \left\{ y\in B_1 {\mbox{ s.t. }} |u(x_0+y)| \ge
\frac{M}{2\,(1+|x_0|)^\vartheta} \right\},\\
D_2 &:=& \left\{ y\in B_1 {\mbox{ s.t. }} |u(x_0+y)| <
\frac{M}{2\,(1+|x_0|)^\vartheta} \right\}.
\end{eqnarray*}
Then, by~\eqref{S3},
$$ \frac{C_0}{(1+|x_0|)^{\vartheta}} \ge
\int_{D_1} |u(x_0+y)|\,dy \ge \frac{M\,|D_1|}{2\,(1+|x_0|)^\vartheta}.$$
Hence
\begin{equation}\label{mis D1}
|D_1| \le \frac{2C_0}{M}
\end{equation}
and, as a consequence,
if~$M$ is large enough, 
\begin{equation}\label{mis D2}
|D_2| \ge |B_1|-|D_1| \ge \frac{9\,|B_1|}{10}.
\end{equation}
Now we define
\begin{eqnarray*}
r_0 &:=& \left( \frac{(1+|x_0|)^2}{M}\right)^{1/(n+2)},
\\ D_3 &:=& D_1\cap B_{r_0}, \\
D_4 &:=& D_1\setminus B_{r_0}.
\end{eqnarray*}
If~$y\in D_3$ we use~\eqref{S1}, \eqref{S2}
and a Taylor expansion of~$F_1$ to obtain that
\begin{eqnarray*}
u(x_0+y)-u(x_0) &\le& M
\Big( F_1(x_0+y) -F_1(x_0) \Big)\\
&\le& 
M \nabla F_1(x_0)\cdot y+
M \sup_{\xi\in B_1} |D^2 F_1(x_0+\xi)|\,|y|^2.\end{eqnarray*}
Notice that
$$ |\partial^2_{x_i,x_j}F_1(x)| \le
\frac{2\vartheta}{(1+|x|)^{\vartheta+1}\,|x|}+
\frac{\vartheta\,(\vartheta+1)}{(1+|x|)^{\vartheta+2}}$$
and so, by~\eqref{x0ma} and~\eqref{yma},
$$ \sup_{\xi\in B_1} |D^2 F_1(x_0+\xi)| \le 
\frac{C_1}{(1+|x_0|)^{\vartheta+2}},$$
for some~$C_1>0$. Therefore, for any~$y\in D_3$,
$$ u(x_0+y)-u(x_0) \le M\nabla F_1(x_0)\cdot y+
\frac{C_1\, M\,|y|^2}{(1+|x_0|)^{\vartheta+2}}$$
and so, since the odd term vanishes in the principal value integral,
\begin{equation}\label{D3 est}
\begin{split}
\PV \int_{D_3} \frac{ u(x_0+y)-u(x_0) }{|y|^{n+2s}}\,dy\, &\le
\frac{C_1\, M}{(1+|x_0|)^{\vartheta+2}}
\int_{D_3} |y|^{2-n-2s}\,dy \\
&\le \frac{C_1\, M}{(1+|x_0|)^{\vartheta+2}}
\int_{B_{r_0}} |y|^{2-n-2s}\,dy \\
&= \frac{C_2\, M\, r_0^{2-2s}}{(1+|x_0|)^{\vartheta+2}}.
\end{split}
\end{equation}
Moreover, by~\eqref{S1}, \eqref{S2}, and~\eqref{yma},
we have that,
if~$y\in D_4$,
\begin{eqnarray*}
\frac{u(x_0+y)-u(x_0)}{|y|^{n+2s}} &\le& 
\frac{F_M(x_0+y)-F_M(x_0)}{|y|^{n+2s}} \\
&\le& \frac{F_M(x_0+y)}{|y|^{n+2s}} \\
&\le& \frac{M}{r_0^{n+2s} (1+|x_0+y|)^\vartheta}
\\ &\le& \frac{2^\vartheta\, M}{r_0^{n+2s} (1+|x_0|)^\vartheta}
.\end{eqnarray*}
Accordingly, making use of~\eqref{mis D1}, we conclude that
\begin{equation}\label{D4 est}
\begin{split}
\PV\int_{D_4} \frac{ u(x_0+y)-u(x_0) }{|y|^{n+2s}}\,dy\, &\le
\frac{2^\vartheta\, M\,|D_4|}{r_0^{n+2s} (1+|x_0|)^\vartheta} \\
&\le\frac{2^\vartheta\, M\,|D_1|}{r_0^{n+2s} (1+|x_0|)^\vartheta} \\
&\le\frac{C_3}{r_0^{n+2s} (1+|x_0|)^\vartheta}.
\end{split}
\end{equation}
for some $C_3>0$. Thus, by~\eqref{D3 est} and~\eqref{D4 est}, we obtain
\begin{equation}\label{D1 est}
\begin{split}
\PV\int_{D_1} \frac{ u(x_0+y)-u(x_0) }{|y|^{n+2s}}\,dy\, &\le
\frac{C_2\, M\, r_0^{2-2s}}{(1+|x_0|)^{\vartheta+2}}+
\frac{C_3}{r_0^{n+2s} (1+|x_0|)^\vartheta}\\
&\le\frac{C_4\, M^\beta}{(1+|x_0|)^{\vartheta+2\beta}}
\end{split}\end{equation}
for a suitable~$C_4>0$, where
\begin{equation}\label{beta def}
\beta:=\frac{n+2s}{n+2}\in (0,1).
\end{equation}
This completes the estimate of the contribution in~$D_1$.
Now we estimate the contribution in~$D_2$. For this, we notice that,
if~$y\in D_2$, then
$$ u(x_0+y)-u(x_0)=u(x_0+y)-\frac{M}{(1+|x_0|)^\vartheta}-\epsilon \le
-\frac{M}{2\,(1+|x_0|)^\vartheta}$$
and therefore
\begin{equation}\label{D2 est}
\begin{split}
\PV\int_{D_2} \frac{ u(x_0+y)-u(x_0) }{|y|^{n+2s}}\,dy\, &\le
-\frac{M}{2\,(1+|x_0|)^\vartheta}
\int_{D_2} \frac{ dy}{|y|^{n+2s}} \\
&\le -\frac{M}{2\,(1+|x_0|)^\vartheta}
\int_{D_2} \,dy\\
&\le -\frac{9 M\,|B_1|}{20\,(1+|x_0|)^\vartheta},\end{split}
\end{equation}
thanks to~\eqref{mis D2}.
By collecting the estimates in \eqref{D1 est}
and~\eqref{D2 est}, we obtain that
\begin{eqnarray*}
\PV\int_{B_1} \frac{ u(x_0+y)-u(x_0) }{|y|^{n+2s}}\,dy
&\le&\frac{C_4\, M^\beta}{(1+|x_0|)^{\vartheta+2\beta}}
-\frac{9 M\,|B_1|}{20\,(1+|x_0|)^\vartheta}
\\ &=& -\frac{9 M\,|B_1|}{20\,(1+|x_0|)^\vartheta} 
\left( 1 - \frac{C_5}{M^{1-\beta}\,(1+|x_0|)^{2\beta}} \right)
\\ &\le& -\frac{9 M\,|B_1|}{20\,(1+|x_0|)^\vartheta} 
\left( 1 - \frac{C_5}{M^{1-\beta}} \right)
\end{eqnarray*}
for some~$C_5>0$. So, since~$\beta\in(0,1)$ due to~\eqref{beta def},
for~$M$ large we obtain the desired result.
\end{proof}

\section{Fractional Laplace computations II -- Integral estimates
at infinity}\label{ICAI}

This is the counterpart of Section~\ref{IEAP}, since here we
study the contribution coming from infinity of the fractional Laplacian
of a function touched by above by a polynomial barrier
(since the singularity of the integral only occur at the origin,
we do not need to use the principal value notation for such contribution).

\begin{lemma}\label{Con-L2}
Let $s\in (0,1)$,~$\vartheta\in(0,n+2s]$, $\epsilon\in(0,1)$, and
$$ F_1(x):=\frac{1}{(1+|x|)^\vartheta}.$$
For any fixed~$M>0$ let~$F_M (x):=M F_1(x)$.
Suppose that $u\in L^\infty(\R^n)\cap C^2(\R^n)$ satisfies
\begin{eqnarray}
&& F_M(x_0)+\epsilon=u(x_0) \ {\mbox{ for some point }} x_0\in\R^n, \label{SS1}\\
&& F_M (x)+\epsilon\geq u(x) \ {\mbox{ for every }} x\in\R^n \label{SS2} \\
&& \int_{B_1(x)} |u(\zeta)|\,d\zeta \leq \frac{C_0}{(1+|x|)^{\vartheta}}
\ {\mbox{ for every }} x\in\R^n\label{S3S} \end{eqnarray}
for some~$C_0>0$.

Then there exists~$M_0>0$, depending only on~$n$, $s$,
$\|u\|_{L^\infty(\R^n)}$, $\vartheta$, and~$C_0$,
such that if~$M\ge M_0$ then
$$ \int_{\R^n\setminus B_1} \frac{u(x_0+y)-u(x_0)}{|y|^{n+2s}}\,dy \le
\frac{M\,|B_1|}{20\,(1+|x_0|)^{\vartheta}}.$$
\end{lemma}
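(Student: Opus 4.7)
The plan is to split the integral as
\begin{equation*}
\int_{\R^n\setminus B_1} \frac{u(x_0+y)-u(x_0)}{|y|^{n+2s}}\,dy =
\int_{\R^n\setminus B_1} \frac{u(x_0+y)}{|y|^{n+2s}}\,dy
- u(x_0)\int_{\R^n\setminus B_1} \frac{dy}{|y|^{n+2s}},
\end{equation*}
and to notice that the second summand is non-positive, because \eqref{SS1} gives $u(x_0)=F_M(x_0)+\epsilon>0$, so it can simply be dropped in the upper bound. Exactly as in the opening of the proof of Lemma~\ref{Con-L1}, I would also reduce to the case $|x_0|\ge 3$: otherwise \eqref{SS1} would force $M$ to be bounded in terms of $\|u\|_{L^\infty(\R^n)}$, which is incompatible with $M\ge M_0$ once $M_0$ is chosen large.

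The core task is then to bound the first summand by a constant \emph{independent of $M$} times $(1+|x_0|)^{-\vartheta}$. For this I would cover $\R^n$ with the unit cubes $Q_k:=k+[-1/2,1/2]^n$, $k\in\Z^n$, and use the trivial majorization $u(x_0+y)\le |u(x_0+y)|$. For every $k\in\Z^n$ with $|k|\ge\sqrt{n}$ and every $y\in Q_k$ one has $|y|\ge |k|-\sqrt{n}/2\ge |k|/2$, so
\begin{equation*}
\int_{Q_k} \frac{|u(x_0+y)|}{|y|^{n+2s}}\,dy \le \frac{2^{n+2s}}{|k|^{n+2s}}\int_{B_{\sqrt{n}/2}(x_0+k)} |u(z)|\,dz\le \frac{C}{|k|^{n+2s}\,(1+|x_0+k|)^\vartheta},
\end{equation*}
where the last step follows by covering $B_{\sqrt{n}/2}(x_0+k)$ with a dimensional number of unit balls and applying \eqref{S3S}. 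Summing over these indices via Corollary~\ref{pc1.1} (whose use is legitimate since $|x_0|\ge 3$) contributes a term $\le C/(1+|x_0|)^\vartheta$. The finitely many cubes with $|k|<\sqrt{n}$ are treated separately using $|y|\ge 1$ on $\R^n\setminus B_1$, so that $|y|^{-(n+2s)}\le 1$, and then \eqref{S3S} applied to a bounded number of unit balls close to $x_0$ produces again a bound of the form $C/(1+|x_0|)^\vartheta$ (since $|x_0|\ge 3$ keeps the relevant centers comparable to $x_0$).

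Putting these estimates together gives
\begin{equation*}
\int_{\R^n\setminus B_1} \frac{u(x_0+y)-u(x_0)}{|y|^{n+2s}}\,dy \le \frac{C^*}{(1+|x_0|)^\vartheta},
\end{equation*}
with $C^*$ depending only on $n$, $s$, $\vartheta$, $C_0$, and $\|u\|_{L^\infty(\R^n)}$. Taking $M_0\ge 20\,C^*/|B_1|$ then yields the claimed inequality. The only genuinely delicate point of the argument is ensuring $M$-independence of $C^*$: this is precisely why one must exploit the $L^1$ tail bound~\eqref{S3S} rather than the pointwise barrier $u\le F_M+\epsilon$, which would contribute a constant proportional to $M$ and hence be useless for absorbing the positive tail into $M\,|B_1|/20$.
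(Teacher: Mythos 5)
Your proof is correct, and it is in fact a streamlined version of the paper's argument. Both proofs share the same skeleton: discard the $-u(x_0)$ term (you by positivity of $u(x_0)=F_M(x_0)+\epsilon$, the paper by passing to $\big(u(x_0+y)-\epsilon\big)^+$), reduce to $|x_0|\ge 3$ exactly as in Lemma~\ref{Con-L1}, tile the exterior region by cubes, and sum via Corollary~\ref{pc1.1}. The difference is in the per-cube estimate: the paper splits each (small) cube into the level sets $D_1(k)$, where $|u|\ge \sqrt{M}(1+|x_0+k|)^{-\vartheta}$, and $D_2(k)$, using~\eqref{S3S} only to bound $|D_1(k)|\le C_0/\sqrt M$ and then invoking the pointwise barrier $u\le F_M+\epsilon$ on $D_1(k)$; this yields a tail bound of order $\sqrt{M}\,(1+|x_0|)^{-\vartheta}$, which is absorbed into $M|B_1|/\big(20(1+|x_0|)^{\vartheta}\big)$ for $M$ large. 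You instead apply~\eqref{S3S} directly to the integral of $|u|$ over each unit cube (after enlarging to nearby unit balls, with the harmless comparability $1+|x_0+k|\ge c_n(1+|x_0|)$ for the finitely many cubes near the origin), never using the barrier in the tail, and so you obtain the stronger, $M$-independent bound $C^*(1+|x_0|)^{-\vartheta}$, after which $M_0\ge 20C^*/|B_1|$ closes the argument. Both routes prove the lemma; yours avoids the $D_1/D_2$ dichotomy and the auxiliary threshold $\sqrt M$ altogether, while the paper's version illustrates how to proceed when one only wants to use the $L^1$ information to control the measure of the bad set rather than the integral itself. Your closing remark is also accurate: using the pointwise barrier alone on the tail would produce a contribution proportional to $M$ and could not be absorbed, which is precisely why either~\eqref{S3S} (your way) or the measure/barrier combination (the paper's way) is needed.
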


\begin{proof} We notice that
$$ u(x_0+y)-u(x_0) =
u(x_0+y) - F_M(x_0)-\epsilon \le u(x_0+y)-\epsilon\le
\big( u(x_0+y)-\epsilon\big)^+.$$
Also, the cube centered at zero with side~$1/\sqrt{n}$ lies
inside the unit ball, namely~$Q_{1/\sqrt{n}}\subset B_1$.
Therefore
\begin{equation}\label{0o1} \int_{\R^n\setminus B_1} 
\frac{u(x_0+y)-u(x_0)}{|y|^{n+2s}}\,dy\le
\int_{\R^n\setminus Q_{1/\sqrt{n}}} 
\frac{\big(u(x_0+y)-\epsilon\big)^+
}{|y|^{n+2s}}\,dy.\end{equation}
Now we cover $\R^n\setminus Q_{1/\sqrt{n}}$ with cubes
of side~$1/(8n\sqrt{n})$ centered at points of a 
sublattice~${\mathcal{Z}}$ (roughly speaking, this sublattice
is just a scaling of~$\Z^n$ by a factor~$1/(8n\sqrt{n})$,
outside~$Q_{1/\sqrt{n}}$).
In this way,
\begin{equation}\label{0.9.5}
{\mbox{if $k\in {\mathcal{Z}}$,
then~$|k|\ge \displaystyle\frac{1}{2\sqrt{n}}$.}}\end{equation}
Therefore
\begin{equation}\label{09.1.1}
{\mbox{if $k\in {\mathcal{Z}}$ and $y\in Q_{1/(8n\sqrt{n})}(k)$
then~$|y|\ge |k|-|y-k|\ge \displaystyle\frac{|k|}{2}+
\displaystyle\frac{1}{4\sqrt{n}}-\displaystyle\frac{1}{8n}
\ge \frac{|k|}{2}$.}}\end{equation}
Moreover,
\begin{equation}\label{09.1.2}\begin{split}
&{\mbox{if $k\in {\mathcal{Z}}$ and $y\in Q_{1/(8n\sqrt{n})}(k)$
then}}\\ &1+|x_0+y|\ge 1+|x_0+k|-|y-k|\ge 1+|x_0+k|-
\displaystyle\frac{1}{8n}\ge\displaystyle\frac{1}{2}\big( 
1+|x_0+k|\big).\end{split}\end{equation}
Now we observe that, from~\eqref{0o1},
\begin{equation}\label{0.9.4}
\int_{\R^n\setminus B_1} \frac{u(x_0+y)-u(x_0)}{|y|^{n+2s}}\,dy \le
\sum_{k\in {\mathcal{Z}}}
\int_{Q_{1/(8n\sqrt{n})}(k)}
\frac{\big( u(x_0+y)-\epsilon\big)^+
}{|y|^{n+2s}}\,dy.\end{equation}
We define
\begin{eqnarray*}
D_1(k) &:=& \left\{ y\in Q_{1/(8n\sqrt{n})}(k)
{\mbox{ s.t. }} |u(x_0+y)| \ge
\frac{\sqrt{M}}{(1+|x_0+k|)^\vartheta} \right\}, \\
D_2(k) &:=& \left\{ y\in Q_{1/(8n\sqrt{n})}(k)
{\mbox{ s.t. }} |u(x_0+y)| <
\frac{\sqrt{M}}{(1+|x_0+k|)^\vartheta} \right\}.\end{eqnarray*}
Then, from~\eqref{S3S},
\begin{eqnarray*}
\frac{C_0}{(1+|x_0+k|)^{\vartheta}} &\ge&
\int_{B_1(x_0+k)} |u(\zeta)|\,d\zeta 
\\ &\ge& \int_{Q_{1/(8n\sqrt{n})}(x_0+k)} |u(\zeta)|\,d\zeta
\\ &\ge& \int_{D_1(k)} |u(x_0+y)|\,dy
\\ &\ge& \frac{\sqrt{M}\,|D_1(k)|}{(1+|x_0+k|)^\vartheta} 
\end{eqnarray*}
and so
$$ |D_1(k)|\le \frac{C_0}{\sqrt{M}}.$$
Consequently, using~\eqref{SS2}, \eqref{09.1.1}
and~\eqref{09.1.2}, we see that
\begin{equation}\label{0.9.0.11}
\begin{split}
\int_{D_1(k)} \frac{
\big( u(x_0+y)-\epsilon\big)^+
}{|y|^{n+2s}}\,dy\,&\le
\int_{D_1(k)} \frac{F_M(x_0+y)}{|y|^{n+2s}}\,dy \\
&\le \int_{D_1(k)} \frac{C_1\,M}{(1+|x_0+k|)^\vartheta\,|k|^{n+2s}}\,dy
\\ &=\frac{C_1\,M\,|D_1(k)|}{(1+|x_0+k|)^\vartheta\,|k|^{n+2s}}\\
&\le \frac{C_0\,C_1\,\sqrt{M}}{(1+|x_0+k|)^\vartheta\,|k|^{n+2s}},
\end{split}
\end{equation}
for a suitable~$C_1>0$. Now we use again~\eqref{09.1.1}
to estimate the 
contribution
in~$D_2(k)$ in the following computation:
\begin{equation}\label{0.9.0.12}
\begin{split}
\int_{D_2(k)} \frac{u^+(x_0+y)}{|y|^{n+2s}}\,dy\,&\le
\int_{D_2(k)} 
\frac{\sqrt{M}}{(|k|/2)^{n+2s}\,(1+|x_0+k|)^\vartheta} \,dy \\
&\le 
\frac{2^{n+2s}\,|Q_{1/(8n\sqrt{n})}|\,\sqrt{M}}{|k|^{n+2s}\,(1+|x_0+k|)^\vartheta}.
\end{split}
\end{equation}
Using~\eqref{0.9.0.11}
and~\eqref{0.9.0.12}, and the fact that
$$ \big( u(x_0+y)-\epsilon\big)^+\le u^+(x_0+y),$$
we conclude that
$$ \int_{Q_{1/(8n\sqrt{n})}(k)} \frac{
\big( u(x_0+y)-\epsilon\big)^+
}{|y|^{n+2s}}\,dy
\le\frac{C_2\,\sqrt{M}}{(1+|x_0+k|)^\vartheta\,|k|^{n+2s}},$$
for a suitable~$C_2>0$. So we plug this estimate into~\eqref{0.9.4}
and we deduce that
$$\int_{\R^n\setminus B_1} \frac{u(x_0+y)-u(x_0)}{|y|^{n+2s}}\,dy \le
C_2\,\sqrt{M} \sum_{k\in {\mathcal{Z}}}
\frac{1}{(1+|x_0+k|)^\vartheta\,|k|^{n+2s}}.$$
Thus we estimate the latter series using Corollary~\ref{pc1.1}
(notice that~${\mathcal{Z}}$ may be seen as a scaled version
of~$\Z^n\setminus\{0\}$, due to~\eqref{0.9.5},
and $x_0$ stays away from $0$,
as pointed out in \eqref{x0ma}, so the assumptions
of Corollary~\ref{pc1.1} are satisfied, up to scaling): we obtain that
$$\int_{\R^n\setminus B_1} \frac{u(x_0+y)-u(x_0)}{|y|^{n+2s}}\,dy \le
\frac{C_3\,\sqrt{M}}{(1+|x_0|)^\vartheta},$$
for a suitable~$C_3>0$, hence the claim plainly follows if~$M$
is large enough.
\end{proof}

Combining the estimates of
Lemmata~\ref{Con-L1} and~\ref{Con-L2}
we obtain that the negative local contribution
cannot be compensated by the contribution at infinity.
More explicitly, we have:

\begin{corollary}\label{ULCO}
Let $s\in (0,1)$, $\vartheta\in(0,n+2s]$, $\epsilon\in(0,1)$, and
$$ F_1(x):=\frac{1}{(1+|x|)^\vartheta}.$$
For any fixed~$M>0$ let~$F_M (x):=M F_1(x)$.
Suppose that $u\in L^\infty(\R^n)\cap C^2(\R^n)$ satisfies
\begin{eqnarray*}
&& F_M(x_0)+\epsilon=u(x_0) \ {\mbox{ for some point }} x_0\in\R^n, \\
&& F_M (x)+\epsilon\geq u(x) \ {\mbox{ for every }} x\in\R^n \\
&& \int_{B_1(x)} |u(\zeta)|\,d\zeta \leq \frac{C_0}{(1+|x|)^{\vartheta}}
\ {\mbox{ for every }} x\in\R^n\end{eqnarray*}
for some~$C_0>0$.

Then there exists~$M_0>0$, depending only on~$n$, $s$,
$\|u\|_{L^\infty(\R^n)}$, $\vartheta$, and~$C_0$,
such that if~$M\ge M_0$ then
\begin{equation}\label{ntg}
L_s u(x_0)=\PV\int_{\R^n} 
\frac{u(x_0+y)-u(x_0)}{|y|^{n+2s}}\,dy \le
-\frac{M\,|B_1|}{20\,(1+|x_0|)^{\vartheta}}.\end{equation}
\end{corollary}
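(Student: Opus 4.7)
The plan is essentially to split the fractional Laplacian integral at radius $1$ around $x_0$ and apply the two lemmas of the preceding sections to the two pieces. Since the integrand is singular only at the origin, for the outer piece no principal value is required, whereas for the inner piece the principal value is handled exactly as in Lemma~\ref{Con-L1}. Writing
\begin{equation*}
L_s u(x_0)=\PV\!\int_{B_1}\frac{u(x_0+y)-u(x_0)}{|y|^{n+2s}}\,dy+\int_{\R^n\setminus B_1}\frac{u(x_0+y)-u(x_0)}{|y|^{n+2s}}\,dy,
\end{equation*}
the strategy is to control each summand separately.

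First, I would verify that the three hypotheses of Lemma~\ref{Con-L1} are satisfied: conditions~\eqref{S1}--\eqref{S2} coincide with the corresponding assumptions of the corollary, while condition~\eqref{S3} is a particular case (at $x=x_0$) of the integral bound assumed here for every $x\in\R^n$. Hence, for $M$ larger than some threshold $M_0'$ depending only on $n$, $s$, $\|u\|_{L^\infty(\R^n)}$, $\vartheta$, and $C_0$, Lemma~\ref{Con-L1} gives
\begin{equation*}
\PV\!\int_{B_1}\frac{u(x_0+y)-u(x_0)}{|y|^{n+2s}}\,dy\le -\frac{M\,|B_1|}{10\,(1+|x_0|)^{\vartheta}}.
\end{equation*}

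Second, the hypotheses of Lemma~\ref{Con-L2} coincide verbatim with those of the corollary, so for $M$ larger than another threshold $M_0''$ of the same dependency we obtain
\begin{equation*}
\int_{\R^n\setminus B_1}\frac{u(x_0+y)-u(x_0)}{|y|^{n+2s}}\,dy\le \frac{M\,|B_1|}{20\,(1+|x_0|)^{\vartheta}}.
\end{equation*}

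Finally, setting $M_0:=\max\{M_0',M_0''\}$ and adding the two displayed inequalities produces
\begin{equation*}
L_s u(x_0)\le \left(-\frac{1}{10}+\frac{1}{20}\right)\frac{M\,|B_1|}{(1+|x_0|)^{\vartheta}}=-\frac{M\,|B_1|}{20\,(1+|x_0|)^{\vartheta}},
\end{equation*}
which is precisely~\eqref{ntg}. There is no genuine obstacle: the content of the statement is entirely absorbed by the two preceding lemmas, and the only arithmetic point is that the negative local contribution of order $M/10$ dominates the positive tail contribution of order $M/20$, leaving a comfortable margin of $M/20$.
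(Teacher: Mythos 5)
Your proof is correct and is exactly the argument the paper intends: the corollary is obtained by splitting the principal value integral at $B_1$, applying Lemma~\ref{Con-L1} (noting that \eqref{S3} is the case $x=x_0$ of the stronger hypothesis) and Lemma~\ref{Con-L2}, and adding the bounds $-M|B_1|/10$ and $M|B_1|/20$ with $M_0$ the larger of the two thresholds. Nothing is missing.
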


\section{Decay estimates in average}\label{D:S}

Here we obtain some precise information
on the decay at infinity of the solution of
a nonlocal equation with decaying nonlinearity:
 
\begin{prop}\label{PRTHM1}
Let $s\in (0,1)$, $u\in L^\infty(\R^n)\cap C^2(\R^n)$ satisfy
\begin{equation}\label{122}
-L_s u + c u=g \quad {\mbox{ in }}\R^n, 
\end{equation}
where~$c(x)\in (c_0,\,c_0^{-1})$, for some~$c_0\in(0,1)$ 
and 
\begin{equation}\label{g} |g(x)|\le\frac{C}{(1+|x|)^{\alpha}} 
\end{equation}
for some~$C>0$ and $\alpha>0$. 

Then, for any $x\in\R^n$,
\begin{equation}\label{ball}
\int_{B_1(x)} |u(y)|\, dy
\le \frac{C_*}{|x|^{\vartheta}}
\end{equation}
where~$C_*>0$ is a suitable constant and
\begin{equation}\label{SGAMMA}
\vartheta := \frac{ \min\{ n+2s-(n-2\alpha)^+, \, 2\alpha\} }{2}.
\end{equation}
\end{prop}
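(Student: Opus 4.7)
The plan has three stages. First, via the Kato-type inequality for $L_s$, the function $|u|$ is a nonnegative subsolution $-L_s|u|+c|u|\le|g|$, so without loss of generality $u\ge 0$ satisfies $-L_s u+cu\le|g|$. I then pass to the local average
\begin{equation*}
\Phi(x):=\int_{B_1(x)}u(y)\,dy.
\end{equation*}
Using the symmetric representation $L_s u(y)=\tfrac12\int(u(y+z)+u(y-z)-2u(y))|z|^{-n-2s}\,dz$ and Fubini, the change of variables $y\mapsto y\pm z$ gives the key identity $\int_{B_1(x)}L_s u(y)\,dy=L_s\Phi(x)$. Combined with $\int_{B_1(x)}cu\,dy\ge c_0\Phi(x)$, this shows that $\Phi$ satisfies the scalar pointwise inequality $-L_s\Phi+c_0\Phi\le h$, where $h(x):=\int_{B_1(x)}|g(y)|\,dy\le C(1+|x|)^{-\alpha}$. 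Moreover, $\Phi$ is nonnegative, bounded by $|B_1|\,\|u\|_{L^\infty}$, and Lipschitz with seminorm controlled by $\|u\|_{L^\infty}$ (via the divergence theorem applied to $\nabla_x\Phi$).

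The core of the argument is a weighted energy estimate. Multiplying the scalar inequality by $\Phi\,w$ with the polynomial weight $w(x):=(1+|x|)^{2\gamma}$ and integrating over $\R^n$, the bilinear form of $L_s$ splits via the symmetric Leibniz-type identity into a nonnegative Dirichlet piece plus a commutator $-\tfrac12\int\Phi^2\,L_s w$. The crucial estimate on $L_s w$ is obtained by decomposing $\R^n$ into unit cubes centered at a lattice and invoking Corollary~\ref{pc1.1} with exponent $2\gamma$: one gets $|L_s w(x)|\le C\,w(x)\,(1+|x|)^{-2s}$ for $|x|$ large, provided $2\gamma\le n+2s$. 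Hence the commutator is absorbed into $c_0\!\int\Phi^2 w$. Cauchy--Schwarz on the right-hand side together with the requirement $\int h^2 w<\infty$ -- which forces $2\gamma\le 2\alpha$ (with a dimensional correction when $\alpha<n/2$) -- closes the estimate and yields
\begin{equation*}
\int_{\R^n}\Phi(y)^2\,(1+|y|)^{2\vartheta}\,dy\le C
\end{equation*}
for the optimal choice $2\vartheta=\min\{n+2s-(n-2\alpha)^+,\,2\alpha\}$, matching~\eqref{SGAMMA}.

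Finally, I would convert this weighted $L^2$ bound into the pointwise estimate~\eqref{ball} by a Morrey-type argument that crucially exploits the Lipschitz control on $\Phi$: since $\Phi$ varies by at most a fixed constant on each $B_1(x)$, the weighted integrability upgrades to the pointwise bound $\Phi(x)\le C(1+|x|)^{-\vartheta}$, which is precisely~\eqref{ball} since $\Phi(x)=\int_{B_1(x)}|u|$.

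The main obstacle I foresee is the energy step: one must choose $\gamma$ so as to simultaneously make the commutator $L_s w$ absorbable (which via the summation estimates of Section~\ref{summation} imposes $2\gamma\le n+2s$) while keeping $\int h^2 w$ finite (which imposes a constraint involving $2\alpha$). Balancing these two constraints is exactly what produces the ``min'' in the formula~\eqref{SGAMMA}, and the factor $1/2$ reflects the quadratic (Cauchy--Schwarz) nature of the estimate -- consistent with the paper's remark that the resulting exponent $\vartheta$ is not claimed to be optimal.
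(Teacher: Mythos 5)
Your reduction to the averaged function $\Phi(x)=\int_{B_1(x)}|u|$ (Kato inequality plus the fact that $L_s$ commutes with convolution against $\chi_{B_1}$) is a reasonable start, but the core energy step does not work as stated. You test with $\Phi\,w$ where $w(x)=(1+|x|)^{2\gamma}$ is a \emph{globally growing} weight and claim $|L_s w(x)|\le C\,w(x)(1+|x|)^{-2s}$ for $2\gamma\le n+2s$ ``by Corollary~\ref{pc1.1}''. This is false in the range you need: for $2\gamma\ge 2s$ the integral defining $L_sw$ diverges at infinity (the integrand behaves like $|y|^{2\gamma-n-2s}$), so $L_sw\equiv+\infty$; and Corollary~\ref{pc1.1} concerns sums against a \emph{decaying} factor $(1+|x_0+k|)^{-\vartheta}$, not a growing one. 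Since the target exponent $2\vartheta$ in \eqref{SGAMMA} exceeds $2s$ in the relevant cases (in the application $n=1$, $\alpha=4s$, one has $\vartheta>2s$), the weight you need is inadmissible. Moreover, even where $L_sw$ makes sense, your Cauchy--Schwarz on the right-hand side requires $\int h^2w<\infty$, i.e.\ $2\gamma<2\alpha-n$, not ``$2\gamma\le2\alpha$ with a dimensional correction''; this global integrability tax of $n$ in the exponent means the scheme cannot reproduce \eqref{SGAMMA} (for $n=1$, $\alpha=4s$, $s\le 1/8$ the admissible range is even empty). Finally, the reabsorption of the commutator into $c_0\int\Phi^2w$ presupposes that $\int\Phi^2w$ is finite, which is essentially the conclusion; one would at least need truncated weights and uniform estimates.

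There is also a quantitative loss in your last step: from $\int\Phi^2(1+|y|)^{2\vartheta}\,dy\le C$ and the Lipschitz bound on $\Phi$, the pointwise estimate you can extract is $\Phi(x)\le C(1+|x|)^{-2\vartheta/(n+2)}$, not $(1+|x|)^{-\vartheta}$ (the correct way would be Cauchy--Schwarz on unit balls plus a covering/Fubini argument, which needs only local $L^2$ decay). The paper avoids all of these issues by never using a growing weight: it tests the equation for $u$ itself against $u\varphi^2$ with the bounded, slowly varying weight $\varphi(x)=(1+\eps^2|x-x_0|^2)^{-N}$, $N=(n+2s)/4$, \emph{centered at the point $x_0$ where decay is sought}; the whole technical effort goes into the absorption estimate $\int\frac{(\varphi(x)-\varphi(y))^2}{|x-y|^{n+2s}}\,dx\le\eps'\varphi^2(y)$, while $\int g^2\varphi^2\lesssim R^{-\gamma}$ with $R=|x_0|/2$ is obtained by splitting $B_R$, $B_R(x_0)$ and the complement. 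The factor $1/2$ in \eqref{SGAMMA} then comes from the final H\"older step converting the $L^2$ average on $B_1(x_0)$ into the $L^1$ bound \eqref{ball}, not from a Cauchy--Schwarz inside the energy estimate. If you want to salvage your route, you would have to replace the global weight by such a localized family and prove an absorption lemma of this type, which is exactly the content of the paper's proof.
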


\begin{proof}
We use that~$u$ satisfies~\eqref{122} in the weak sense,
that is, for any test function~$\psi$, 
$$ 
\int_{\R^n}\int_{\R^n}\frac{(u(x)-u(y))(\psi(x)-\psi(y))}{|x-y|^{n+2s}}\, 
dx\, dy + \int_{\R^n}c\,u\psi\, dx = \int_{\R^n}g\psi\, dx. $$ 
Choosing~$\psi=u\varphi^2$ we get
\begin{equation}\label{133}
\int_{\R^n}\int_{\R^n}\frac{\big(u(x)-u(y)\big)\,\Big(u(x)\varphi^2(x)-u(y)\varphi^2(y)\Big)}{|x-y|^{n+2s}}\, 
dx\, dy + \int_{\R^n}c\,u^2\varphi^2\, dx = \int_{\R^n}gu\varphi^2\, dx. 
\end{equation} 
Notice that we can write 
\begin{eqnarray*}
&&\big(u(x)-u(y)\big)\,\Big(u(x)\varphi^2(x)-u(y)\varphi^2(y)\Big) \\
&=& 
\big(u(x)-u(y)\big)\,\Big(u(x)\varphi^2(x)-u(y)\varphi^2(x)+u(y)\varphi^2(x)-u(y)\varphi^2(y)\Big) 
\\
&=&\big(u(x)-u(y)\big)\, \Big[\big(u(x)-u(y)\big)\varphi^2(x)+u(y)
\big(\varphi^2(x)-\varphi^2(y)\big)\Big] \\
&=& \big(u(x)-u(y)\big)^2\varphi^2(x)+u(y)\big(u(x)-u(y)\big)\,
\big(\varphi(x)+\varphi(y)\big)\,\big(\varphi(x)-\varphi(y)\big). 
\end{eqnarray*}
Hence~\eqref{133} becomes 
\begin{equation}\begin{split}\label{144}
&\int_{\R^n}\int_{\R^n}\frac{(u(x)-u(y))^2\varphi^2(x)}{|x-y|^{n+2s}}\, 
dx\, dy \\
&\quad +\int_{\R^n}\int_{\R^n}
\frac{u(y)\big(u(x)-u(y)\big)\,\big(\varphi(x)+\varphi(y)\big)
\,\big(\varphi(x)-\varphi(y)\big)}{|x-y|^{n+2s}}\, dx\, dy \\
&\quad +\int_{\R^n}c\,u^2\varphi^2\, dx = \int_{\R^n}gu\varphi^2\, dx. 
\end{split}\end{equation}
Now we estimate the second term in~\eqref{144} in the following way
\begin{eqnarray*}
&&\left|\int_{\R^n}\int_{\R^n}\frac{u(y)\big(u(x)-u(y)\big)
\,\big(\varphi(x)+\varphi(y)\big)\,\big(\varphi(x)-\varphi(y)\big)}{|x-y|^{n+2s}}\, 
dx\, dy\right| \\
&\le& 
\frac{1}{4}\int_{\R^n}\int_{\R^n}\frac{(u(x)-u(y))^2(\varphi(x)+\varphi(y))^2}{|x-y|^{n+2s}}\, 
dx\, dy + \int_{\R^n}\int_{\R^n} 
\frac{u^2(y)(\varphi(x)-\varphi(y))^2}{|x-y|^{n+2s}}\, dx\, dy\\
&\le& 
\frac12\int_{\R^n}\int_{\R^n}\frac{(u(x)-u(y))^2(\varphi^2(x)+\varphi^2(y))}{|x-y|^{n+2s}}\, 
dx\, dy +\int_{\R^n}\int_{\R^n} 
\frac{u^2(y)(\varphi(x)-\varphi(y))^2}{|x-y|^{n+2s}}\, dx\, dy\\
&\le& 
\int_{\R^n}\int_{\R^n}\frac{(u(x)-u(y))^2\varphi^2(x)}{|x-y|^{n+2s}}\, 
dx\, dy + \int_{\R^n}\int_{\R^n} 
\frac{u^2(y)(\varphi(x)-\varphi(y))^2}{|x-y|^{n+2s}}\, dx\, dy.
\end{eqnarray*}
Using this and \eqref{144} we obtain
\begin{equation}\begin{split}\label{145}
c_0 \int_{\R^n} u^2\varphi^2\, dx \, &\quad \le 
\int_{\R^n}c\,u^2\varphi^2\, dx\\
&\quad =\int_{\R^n}gu\varphi^2\, dx 
-\int_{\R^n}\int_{\R^n}\frac{(u(x)-u(y))^2\varphi^2(x)}{|x-y|^{n+2s}}\, 
dx\, dy\\
&\qquad\quad
-\int_{\R^n}\int_{\R^n}
\frac{u(y)(u(x)-u(y))(\varphi(x)+\varphi(y))
(\varphi(x)-\varphi(y))}{|x-y|^{n+2s}}\, dx\, dy\\
&\quad\le \int_{\R^n}gu\varphi^2\, dx + I,
\end{split}\end{equation}
where
$$ I:=\int_{\R^n}\int_{\R^n} 
\frac{u^2(y)(\varphi(x)-\varphi(y))^2}{|x-y|^{n+2s}}\, dx\, dy.$$
On the other hand
\begin{eqnarray*}
\int_{\R^n}gu\varphi^2\, dx&=&
\int_{\R^n} 2\,(\sqrt{1/(2c_0)} \,g\varphi) ( \sqrt{c_0/2}\, u\varphi)\, 
dx\\
&\le&
\frac{1}{2c_0}\int_{\R^n} g^2\varphi^2 +\frac{c_0}2
\int_{\R^n} u^2\varphi^2\, dx.\end{eqnarray*}
By plugging this into \eqref{145} and reabsorbing one term on the left 
hand side
we obtain
\begin{equation}\label{146}
\frac{c_0}2 \int_{\R^n} u^2\varphi^2\, dx 
\le\frac{1}{2c_0}\int_{\R^n} g^2\varphi^2\,dx + I.\end{equation}
Our goal is now twofold: to estimate $\int_{\R^n} g^2\varphi^2\,dx$ 
and
to reabsorb $I$ on the left hand side. For this, we choose 
$$ \varphi(x):=\frac{1}{(1+\eps^2|x-x_0|^2)^N}, $$ 
where $x_0\in\R^n$ is fixed, 
\begin{equation}\label{CH}
N:=\frac{n+2s}{4},
\end{equation}
and~$0<\eps\ll 1/N$. Notice that $\varphi\in L^2(\R^n)\cap 
L^\infty(\R^n)$. We set
\begin{equation}\label{390}
R:=|x_0|/2>10,
\end{equation}
and we claim that
\begin{equation}\label{Tw1}
\int_{\R^n} g^2\varphi^2\,dx\le C_\eps R^{-\gamma},
\end{equation}
for some $C_\eps>0$ and
$$ \gamma:=\min\{ n+2s-(n-2\alpha)^+, \, 2\alpha\}.$$
Notice that
\begin{equation}\label{out}
\vartheta=\gamma/2,
\end{equation}
see~\eqref{SGAMMA}.

To prove the claim, we first observe that if $x\in B_R$ then
$$ |x-x_0|\ge |x_0|-|x|\ge 2R-R=R,$$
so
$$ \varphi(x)\le\frac{1}{(1+\eps^2 R^2)^N}\le \frac{1}{\eps^{2N} 
R^{2N}}. $$
Accordingly, using also \eqref{g} and~\eqref{CH}, we obtain
\begin{equation}\label{Ty01}
\begin{split}
\int_{B_R} g^2\varphi^2\,dx &\quad\le\frac{1}{\eps^{4N} R^{4N}}
\int_{B_R} g^2 \,dx \\ &\quad\le \frac{1}{\eps^{4N} R^{4N}}\int_{B_R}
\frac{C}{(1+|x|)^{2\alpha}} \,dx\\ &\quad\le \frac{C}{\eps^{4N} 
R^{4N}}\left[ \int_{B_1}
1\,dx+\int_{B_R\setminus B_1}\frac{C}{|x|^{2\alpha}}\,dx\right]\\
&\quad \le C_\eps R^{-4N} \Big(1+\ell(R)\,R^{(n-2\alpha)^+}\Big)
\\
&\quad \le 2C_\eps \ell(R)\,R^{-n-2s+(n-2\alpha)^+},
\end{split}\end{equation}
for some $C_\eps>0$, where
$$ \ell(R):=\left\{
\begin{matrix}
\log R & {\mbox{ if }} 2\alpha=n,\\
1 & {\mbox{ otherwise.}}
\end{matrix}
\right.$$
Moreover, if $x\in B_R(x_0)$ then
$$ |x|\ge |x_0|-|x-x_0|\ge 2R-R=R$$
and so, from~\eqref{g}, we have
$$ |g(x)|\le\frac{C}{(1+R)^{\alpha}}\le\frac{C}{R^\alpha}.$$
As a consequence
\begin{equation}\label{Ty02}\begin{split}
\int_{B_R(x_0)} g^2\varphi^2\,dx\,&\quad\le\frac{C^2}{R^{2\alpha}}
\int_{B_R(x_0)} \varphi^2 \,dx \\&\quad\le 
\frac{C^2}{R^{2\alpha}} \int_{\R^n} \varphi^2 \,dx
\\&\quad\le 
C_\eps R^{-2\alpha},
\end{split}\end{equation}
for some $C_\eps>0$ (up to renaming it).
Now, if $x\in \R^n\setminus (B_R(x_0)\cup B_R)$ then $|x|\ge R$
and so, from~\eqref{g} and~\eqref{CH},
\begin{equation}\label{Ty03}\begin{split}
\int_{\R^n\setminus (B_R(x_0)\cup B_R)} 
g^2\varphi^2\,dx\,&\quad\le\frac{C^2}{R^{2\alpha}}
\int_{\R^n\setminus (B_R(x_0)\cup B_R)} \varphi^2 \,dx \\ &\quad
\le\frac{C^2}{R^{2\alpha}}
\int_{\R^n\setminus B_R(x_0)} \frac{1}{\eps^{4N} |x-x_0|^{4N}}\,dx 
\\ &\quad
\le C_\eps R^{n-2\alpha-4N}
\\ &\quad= C_\eps R^{-2\alpha-2s}.
\end{split}\end{equation}
Then \eqref{Tw1} follows from~\eqref{Ty01}, \eqref{Ty02} 
and~\eqref{Ty03}.

Now we claim that, for any $\eps'>0$, we can choose $\eps$ sufficiently small
(in the definition of $\varphi$) so that 
\begin{equation}\label{TBP}
\int_{\R^n}\frac{(\varphi(x)-\varphi(y))^2}{|x-y|^{n+2s}}\, dx\le 
\eps'\varphi^2(y),
\end{equation}
holds.

To prove this, we first observe that
\begin{equation}\label{nabla}
|\nabla\varphi(x)|=\frac{2\eps^2 N|x-x_0|}{(1+\eps^2|x-x_0|^2)^{N+1}}\le
2\eps N\varphi(x). \end{equation}
In particular we have that $|\nabla\varphi|\le 2\eps N$
and therefore, for any $r>0$,
\begin{eqnarray*}
\int_{\R^n}\frac{(\varphi(x)-\varphi(y))^2}{|x-y|^{n+2s}}\, dx &\le&
\int_{B_r(y)}\frac{4\eps^2 N^2 |x-y|^2}{|x-y|^{n+2s}}\, dx+
\int_{\R^n\setminus B_r(y)}\frac{4}{|x-y|^{n+2s}}\, dx \\ &\le&
C(\eps^2 r^{2-2s}+r^{-2s}),\end{eqnarray*}
for some $C>0$.
Accordingly, if we choose $r:=1/\sqrt{\eps}$, we obtain
$$ \int_{\R^n}\frac{(\varphi(x)-\varphi(y))^2}{|x-y|^{n+2s}}\, dx \le
2C\eps^s.$$ Hence if $y$ is such that $\eps |y-x_0|\le 
\eps^{-s/(4N)}/|\log\eps|$ then
we have that
\begin{eqnarray*}
|\log\eps|^{-N} \varphi^2(y) &=&
\frac{|\log\eps|^{-N}}{(1+\eps^2|y-x_0|^2)^{2N}} \\
&\ge&
\frac{|\log\eps|^{-N}}{\big(1+
(\eps^{-s/(4N)}/|\log\eps|)^2\big)^{2N}} \\
&\ge&
\frac{|\log\eps|^{-N}}{\big(2        
(\eps^{-s/(4N)}/|\log\eps|)^2\big)^{2N}}\\
&=&
2^{-2N} \eps^s |\log\eps|^{3N}
\\ &\ge& 2C\eps^s\\
&\ge& \int_{\R^n}\frac{(\varphi(x)-\varphi(y))^2}{|x-y|^{n+2s}}\, dx,
\end{eqnarray*}
provided that $\eps$ is small enough, and this shows that
\eqref{TBP} holds true if $\eps |y-x_0|\le \eps^{-s/(4N)}/|\log\eps|$.
So we may and do suppose that \begin{equation}\label{case}
\eps |y-x_0|\ge \eps^{-s/(4N)}/|\log\eps|.\end{equation}
Notice that, in this case, $\eps |y-x_0|\ge1$ if $\eps$ is small enough 
and so
\begin{equation}\label{QUI}
\varphi^2(y)=\frac{1}{(1+\eps^2|y-x_0|^2)^{2N}}\ge
\frac{1}{(2\eps^2|y-x_0|^2)^{2N}} =\frac{1}{4^N \eps^{n+2s} 
|y-x_0|^{n+2s}},
\end{equation}
thanks to \eqref{CH}.
Now we set
$$ r_\eps:=\frac{\eps^{-(n+3s)/(n+2s)}}{2|\log\eps|}$$
and we study the contributions in $B_{r_\eps}(x_0)$ and in 
$B_{r_\eps}(y)$.

For this, we point out that, by \eqref{CH} and \eqref{case},
\begin{equation}\label{Pe}
|y-x_0|\ge \frac{ \eps^{-(4N+s)/(4N)} }{|\log\eps|}=
\frac{ \eps^{-(n+3s)/(n+2s)} }{|\log\eps|}=2 r_\eps.
\end{equation} 
Therefore,
if $x\in B_{r_\eps}(x_0)$ we have that
$$ |x-y|\ge |x_0-y| - |x-x_0|\ge
|x_0-y| - r_\eps\ge\frac{|x_0-y|}2$$
hence, using \eqref{QUI}, we see that
\begin{equation}\label{AB1}
\begin{split}
\int_{B_{r_\eps}(x_0)}\frac{(\varphi(x)-\varphi(y))^2}{|x-y|^{n+2s}}\, 
dx \,&\le
\int_{B_{r_\eps}(x_0)}\frac{4^{n+1+2s}}{|x_0-y|^{n+2s}}\, dx\\
&\le C \frac{r_\eps^n}{|x_0-y|^{n+2s}} \\
&\le 4^N C 
\frac{\eps^{-n(n+3s)/(n+2s)}}{2|\log\eps|^n}\,\eps^{n+2s}\varphi^2(y) \\
&= 4^N C\frac{ \eps^{s(n+4s)/(n+2s)}}{2|\log\eps|^n}\, \varphi^2(y).
\end{split}
\end{equation}
Now we estimate the contribution in $B_{r_\eps}(y)$.
For this, we take $x\in B_{r_\eps}(y)$ and $\xi=tx+(1-t)y$ with 
$t\in[0,1]$
such that
$$ |\varphi(x)-\varphi(y)|\le |\nabla \varphi(\xi)|\,|x-y|.$$
Notice that, in this case,
$$ |\xi-y|=t|x-y|\le r_\eps \le\frac{|y-x_0|}2$$
thanks to \eqref{Pe}, and therefore
$$ |\xi-x_0|\ge |y-x_0|-|\xi-y|\ge\frac{|y-x_0|}2.$$
Using this and \eqref{nabla} we obtain that
\begin{eqnarray*}
|\nabla\varphi(\xi)|&\le& 2\eps N\varphi(\xi)
\\ &=& \frac{2\eps N}{(1+\eps^2|\xi-x_0|^2)^N}\\
&\le& \frac{2^{2N+1}\eps N}{(1+2^2\, \eps^2|\xi-x_0|^2)^N}\\
&\le& \frac{2^{2N+1}\eps N}{(1+\eps^2|y-x_0|^2)^N} \\
&=& 2^{2N+1}\eps N \varphi(y).
\end{eqnarray*}
As a consequence
\begin{equation}\label{AB2}
\begin{split}
\int_{B_{r_\eps}(y)}\frac{(\varphi(x)-\varphi(y))^2}{|x-y|^{n+2s}}\, dx 
\,&\le
\int_{B_{r_\eps}(y)}\frac{4^{2N+2}\eps^2 N^2 
\varphi^2(y)}{|x-y|^{n+2s-2}}\, dx\\
&= C \eps^2 r_\eps^{2-2s}\varphi^2(y)\\
&= \frac{C\,\eps^{2s(n-1+3s)/(n+2s)}}{2^{2-2s} \,|\log\eps|^{2-2s}} 
\varphi^2(y).
\end{split}
\end{equation}
It remains to estimate the contribution in $\R^n\setminus\big(
B_{r_\eps}(x_0)\cup B_{r_\eps}(y)\big)$.
For this we will use the following estimate:
fixed $p\in \R^n$ we have that
\begin{equation}\label{au6}
\int_{\R^n\setminus B_{r_\eps}(p)} 
\frac{dx}{|x-p|^{n+2s}}=\frac{C}{r_\eps^{2s}}=
2^{2s} C\,\eps^{2s(n+3s)/(n+2s)}\,|\log\eps|^{2s}
.\end{equation}
Moreover
$$ \frac{|y-x_0|}{|x-x_0|\,|x-y|}\le
\frac{|y-x|+|x-x_0|}{|x-x_0|\,|x-y|}=
\frac{1}{|x-x_0|}+\frac{1}{|x-y|}$$
and therefore
$$ \frac{|y-x_0|^{n+2s}}{|x-x_0|^{n+2s}\,|x-y|^{n+2s}}\le
2^{n+2s}\left(\frac{1}{|x-x_0|^{n+2s}}+\frac{1}{|x-y|^{n+2s}}\right).$$
Hence, if we integrate over $\R^n\setminus\big( 
B_{r_\eps}(x_0)\cup B_{r_\eps}(y)\big)$ and we use \eqref{au6}
we obtain that
\begin{equation}\label{au7}\begin{split}
&\int_{\R^n\setminus\big( 
B_{r_\eps}(x_0)\cup B_{r_\eps}(y)\big)}
\frac{|y-x_0|^{n+2s}}{|x-x_0|^{n+2s}\,|x-y|^{n+2s}}\,dx\\&\quad\le
2^{n+2s}\left(
\int_{\R^n\setminus B_{r_\eps}(x_0)}
\frac{dx}{|x-x_0|^{n+2s}}+
\int_{\R^n\setminus B_{r_\eps}(y)}
\frac{dx}{|x-y|^{n+2s}}\right)\\ &\quad\le C\,\eps^{2s(n+3s)/(n+2s)}\,
|\log\eps|^{2s},\end{split}\end{equation}
up to renaming constants. Moreover, exploiting \eqref{CH} and \eqref{QUI}
we see that
$$ \varphi^2(x)=\frac{1}{(1+\eps^2|x-x_0|^2)^{(n+2s)/2}} \le
\frac{1}{\eps^{n+2s} |x-x_0|^{n+2s}} \le \frac{4^N\,|y-x_0|^{n+2s}}{
|x-x_0|^{n+2s}}\,\varphi^2(y).$$
Therefore
\begin{equation}\label{AB3}
\begin{split}
&\int_{\R^n\setminus\big( 
B_{r_\eps}(x_0)\cup B_{r_\eps}(y)\big)}
\frac{\varphi^2(x)}{|x-y|^{n+2s}}\, dx \\&\quad\le
4^N \varphi^2(y)\,\int_{\R^n\setminus\big( 
B_{r_\eps}(x_0)\cup B_{r_\eps}(y)\big)}
\frac{|y-x_0|^{n+2s}}{
|x-x_0|^{n+2s}\,|x-y|^{n+2s}}\, dx \\
&\quad\le 4^N 
C\,\eps^{2s(n+3s)/(n+2s)}\,|\log\eps|^{2s}\,\varphi^2(y),\end{split}\end{equation}
thanks to \eqref{au7}. Furthermore, by \eqref{au6} we have that
\begin{equation}\label{AB4}
\begin{split}\int_{\R^n\setminus\big( 
B_{r_\eps}(x_0)\cup B_{r_\eps}(y)\big)}
\frac{\varphi^2(y)}{|x-y|^{n+2s}}\, dx \,&\le
\int_{\R^n\setminus
B_{r_\eps}(y)}
\frac{\varphi^2(y)}{|x-y|^{n+2s}}\, dx\\ &\le
2^{2s} 
C\,\eps^{2s(n+3s)/(n+2s)}\,|\log\eps|^{2s}\,\varphi^2(y).\end{split}\end{equation}
Now we use that
$$ (\varphi(x)-\varphi(y))^2 \le
(|\varphi(x)|+|\varphi(y)|)^2\le 
4(\varphi^2(x)+\varphi^2(y)),$$
so that
by \eqref{AB3} and \eqref{AB4} we obtain
\begin{equation}\label{22b} \int_{\R^n\setminus\big( 
B_{r_\eps}(x_0)\cup B_{r_\eps}(y)\big)}
\frac{(\varphi(x)-\varphi(y))^2}{|x-y|^{n+2s}}\, dx\le
C\,\eps^{2s(n+3s)/(n+2s)} \,|\log\eps|^{2s}\,\varphi^2(y),\end{equation}
up to renaming constants once again.
In view of~\eqref{AB1}, \eqref{AB2} and~\eqref{22b}, the proof
of \eqref{TBP} is finished. 

As a consequence of \eqref{TBP}
we obtain that
$$ I\le \eps' \int_{\R^n} u^2(y) \varphi^2(y)\, dy
=\eps' \int_{\R^n} u^2\varphi^2\,dx.$$
So we take $\eps$ so small that $\eps'\le c_0/4$,
we plug the estimate above into \eqref{146} and we reabsorb one
term into the left hand side (this fixes $\eps$ now once and for all): 
we conclude that
$$ \frac{c_0}4 \int_{\R^n} u^2\varphi^2\, dx
\le\frac{1}{2c_0}\int_{\R^n} g^2\varphi^2\,dx .$$
Hence, from~\eqref{Tw1},
$$ \frac{c_0}4 \int_{\R^n} u^2\varphi^2\, dx \le \frac{C_\eps}{2c_0} 
R^{-\gamma}.$$
Now we use that $\varphi\ge 1/2$ in $B_1(x_0)$ to deduce from this
that
$$ \ave_{B_1(x_0)} u^2\, dx
\le C R^{-\gamma},$$
for some $C>0$.
Then, by the H\"older inequality, \eqref{390}
and~\eqref{out}, for any~$x_0\in\R^n$
such that~$|x_0|>20$ we have that
$$ \ave_{B_1(x_0)} u\, dx \le \sqrt{
\ave_{B_1(x_0)} u^2\, dx}\le \sqrt{C 
R^{-\gamma}}=\sqrt{C}\,R^{-\vartheta}
=2^\vartheta\sqrt{C} |x_0|^{-\vartheta}.$$
Since~$u$ is bounded, a similar estimate holds for~$|x_0|\le20$
as well, by possibly changing the constants (also in dependence 
of~$\|u\|_{L^\infty(B_{20})}$).
This proves~\eqref{ball} and concludes the proof of 
Proposition~\ref{PRTHM1}.
\end{proof}

\begin{rem} {\rm In the sequel, we will only use
Proposition~\ref{PRTHM1} for the proof of Theorem~\ref{DECAY}
when~$n=1$ and~$s\in(0,1/2)$. Though the statement of
Proposition~\ref{PRTHM1} remains valid for
the whole parameter range~$s\in(0,1)$,
in general the exponent~$\vartheta$ found in~\eqref{SGAMMA}
would not be sufficiently accurate
(indeed, we think it is an interesting open
problem to find a sharp value for the exponent~$\vartheta$
in general).

The sensitivity of the decay estimates
on the fractional parameter~$s$ is the main reason for which different methods
are needed to prove Theorem~\ref{DECAY}
when~$s\in(0,1/2)$ and~$s\in[1/2,1)$: in a sense,
when~$s\in(0,1/2)$, the integral contributions
coming from far are predominant and they strongly
affect the available bounds on the asymptotic behaviour
of the solution at infinity.
}\end{rem}

\section{Proof of Theorem~\ref{DECAY}}\label{P:T}


Let~$v$ be as in Theorem~\ref{DECAY}.
We prove that
\begin{equation}\label{TY}
v(x)\le \frac{M_0}{(1+|x|)^\vartheta}
\end{equation}
for any~$x\in\R$, where~$M_0>0$ is a universal constant (the bound
from below follows by exchanging~$v$ with~$-v$).
To this goal, fixed any~$\eps>0$, we use~\eqref{va0}
to find~$R_\eps>0$ such that
\begin{equation}\label{Re}
{\mbox{$|v(x)|\le \eps/2$ for all~$|x|\ge R_\eps$.}}\end{equation}
We claim that
\begin{equation}\label{vb}
v(x)< \frac{M}{(1+|x|)^\vartheta}+\eps\end{equation}
for any~$x\in\R$, as long as
$$ M\ge \|v\|_{L^\infty(\R)}\,(1+R_\eps)^\vartheta.$$
To check this, we distinguish two cases. If~$|x|\le R_\eps$, then
$$ v(x)\le \frac{|v(x)|\,(1+R_\eps)^\vartheta }{(1+|x|)^\vartheta}
\le \frac{M}{(1+|x|)^\vartheta}<\frac{M}{(1+|x|)^\vartheta}+\eps,$$
proving~\eqref{vb} in this case. Conversely if~$|x|\ge R_\eps$, 
then~$v(x)<\eps$ and so~\eqref{vb} holds true in this case too.

Hence, we can take the smallest~$M:=M_\eps\ge 0$ for which~\eqref{vb}
is satisfied. If~$M_\eps=0$ for a sequence of~$\eps\searrow0$
then~\eqref{vb} gives that~$v(x)\le \eps$ and so, in the limit,~$v\le0$,
which proves~\eqref{TY}. Thus, without loss of generality, we can
suppose that~$M_\eps>0$. In this case, 
by \eqref{Re} and a simple compactness argument, 
there exists~$x_\eps\in\R$ for which
\begin{equation}\label{touch}
v(x_\eps)=\frac{M_\eps}{(1+|x_\eps|)^\vartheta}+\eps.\end{equation}
%
Our goal is to show that
\begin{equation}\label{ME}
M_\eps \le M_0
\end{equation}
for a suitable~$M_0>0$ independent of~$\eps$. For this,
we observe that,
by~\eqref{vb}, \eqref{touch} and Proposition~\ref{PRTHM1}
(with~$\alpha:=4s$), we have
that the hypotheses of Corollary~\ref{ULCO} are satisfied
(by taking~$u:=v$ and~$x_0:=x_\eps$). Therefore, by~\eqref{ntg},
if~$M_\eps$ were too large we would have that
\begin{equation}\label{p9} L_s v(x_\eps) \le
-\frac{M_\eps\,|B_1|}{20\,(1+|x_\eps|)^{\vartheta}}.\end{equation}
On the other hand, by~\eqref{touch}, \eqref{eqM},
and~\eqref{Mest}, we have
\begin{equation}\label{p8}\begin{split}
L_s v(x_\eps) \,&= L_s v(x_\eps) - c v(x_\eps) +c\left(
\frac{M_\eps}{(1+|x_\eps|)^\vartheta}+\eps\right)\\
&\ge L_s v(x_\eps) - c v(x_\eps)\\
&= -g(x_\eps)\\
&\geq -\frac{C}{(1+|x_\eps|)^{4s}} \\
&\geq -\frac{C}{(1+|x_\eps|)^{\vartheta}}
\end{split}
\end{equation}
(recall that~$\vartheta\le\alpha=4s$, see~\eqref{SGAMMA}).
Hence~\eqref{p8} and \eqref{p9} show that $M_\eps$ is universally bounded,
proving~\eqref{ME}.

{F}rom~\eqref{ME} we deduce that
$$ v(x)\le \frac{M_\eps}{(1+|x|)^\vartheta}+\eps\le
\frac{M_0}{(1+|x|)^\vartheta}+\eps$$
for any~$x\in\R$, and so, by letting~$\eps\searrow0$,
we obtain~\eqref{TY}.
This concludes\footnote{We remark that~$\vartheta$, as 
defined in~\eqref{SGAMMA}, satisfies
\begin{equation*}
\vartheta = \left\{\begin{array}{ll}
4s \quad
&{\mbox{ if }} s\in(0,1/6], \\
\frac{1+2s}{2} \quad &{\mbox{ if }}s\in(1/6,1/2).
\end{array} \right.
\end{equation*}
In any case, since~$s\in(0,1/2)$, we have that
$$ 2s<\vartheta<1+2s.$$}
the proof of
Theorem~\ref{DECAY}.

\section{Proof of Theorem~\ref{TH-decay}}\label{E:TH}

The proof of Theorem~\ref{TH-decay}
is now analogous to the one of Proposition~7.2
in~\cite{DIPPV}, up to the
following modifications, needed in the case~$s\in(0,1/2)$: 
\begin{itemize}
\item the exponent $1+2s$ in formulas~(7.9) and the previous one in \cite{DIPPV}
must be replaced by~$\vartheta$
(the rest of the argument remains unchanged, since~$\vartheta\in(2s,1+2s]$),
\item the use of Corollary~7.1 of~\cite{DIPPV} is replaced here
by Theorem~\ref{DECAY}.
\end{itemize}

\section{$L^\infty$ bounds}\label{L infty}

The goal of this section is to state some uniform regularity
estimates that will be needed in the subsequent
Section~\ref{7sddd}. 

We introduce the norm
\begin{equation}
\label{CLNC}
\|f\|_{{H}_0^{s}(\R^n)}
:= \sqrt{ \int_{\R^n}\int_{\R^n}
\frac{|f(x)-f(y)|^2}{|x-y|^{n+2s}}\, dx\, dy
}\end{equation}
and we provide an auxiliary estimate:

\begin{lemma}
Let $s \in (0,1)$. There exists a constant $C=C(n,s)>0$ such that, if
$f\in H^{s}(\R^n)$, then
\begin{equation}\label{L76}
\|f\|_{L^2(\R^n)} \le C\| f\|_{H^{s}_0(\R^n)}^{n/(n+2s)}\,
\|f\|_{L^1(\R^n)}^{2s/(n+2s)}.
\end{equation}

Also, if~$f\ge0$ then
\begin{equation}\label{L7678}
\|f\|_{L^2(\R^n)} \le C\| f\|_{H^{s}_0(\R^n)}\, |\{f>0\}|^{s/n}.
\end{equation}
\end{lemma}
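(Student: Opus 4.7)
The plan is to deduce both inequalities from the fractional Sobolev embedding
$$ \|f\|_{L^{2^{*}_{s}}(\R^n)} \le C\,\|f\|_{H^{s}_{0}(\R^n)}, \qquad 2^{*}_{s}:=\frac{2n}{n-2s}, $$
valid for $n>2s$ (which covers all cases of interest for this paper, in particular $n=1$, $s\in(0,1/2)$, where the lemma will actually be applied; the boundary case $n=2s$, i.e.\ $n=1$, $s=1/2$, can be treated separately by a logarithmic variant which we do not need). Once one has the Sobolev embedding, both statements are purely interpolation results.

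\smallskip

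For \eqref{L76} I would interpolate $L^{2}$ between $L^{1}$ and $L^{2^{*}_{s}}$. Hölder's inequality with exponents $\theta\in(0,1)$ satisfying
$$ \frac{1}{2}=\frac{\theta}{1}+\frac{1-\theta}{2^{*}_{s}} $$
yields $\theta=\dfrac{2s}{n+2s}$, and therefore
$$ \|f\|_{L^{2}(\R^n)}\le \|f\|_{L^{1}(\R^n)}^{\theta}\,\|f\|_{L^{2^{*}_{s}}(\R^n)}^{1-\theta}
\le C\,\|f\|_{L^{1}(\R^n)}^{2s/(n+2s)}\,\|f\|_{H^{s}_{0}(\R^n)}^{n/(n+2s)}, $$
which is exactly \eqref{L76}.

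\smallskip

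For \eqref{L7678} I would note that, since $f\ge 0$, the integrand $f^{2}$ is supported on $\{f>0\}$, so Hölder's inequality with conjugate exponents $\dfrac{n}{n-2s}$ and $\dfrac{n}{2s}$ gives
$$ \|f\|_{L^{2}(\R^n)}^{2}=\int_{\{f>0\}} f^{2}\,dx
\le \Bigl(\int_{\R^n} f^{2n/(n-2s)}\,dx\Bigr)^{(n-2s)/n}|\{f>0\}|^{2s/n}
=\|f\|_{L^{2^{*}_{s}}(\R^n)}^{2}\,|\{f>0\}|^{2s/n}. $$
Applying the Sobolev embedding and taking square roots yields
$$ \|f\|_{L^{2}(\R^n)}\le C\,\|f\|_{H^{s}_{0}(\R^n)}\,|\{f>0\}|^{s/n}, $$
which is \eqref{L7678}.

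\smallskip

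There is no real obstacle in this lemma: both inequalities reduce to one application of Hölder once the fractional Sobolev embedding is granted. The only mildly delicate issue is justifying the embedding and identifying the seminorm \eqref{CLNC} with the usual Gagliardo seminorm up to a dimensional constant, but this is entirely standard and can be quoted from \cite{DPV12}. The numerology of the interpolation exponents is straightforward and the two displayed computations above essentially constitute the entire proof.
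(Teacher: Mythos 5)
Your argument is correct in the range where the fractional Sobolev embedding exists, but it is a genuinely different route from the paper's. The paper proves \eqref{L76} by a Nash-type argument entirely in Fourier space: for a cut-off radius $\rho$ one bounds the low frequencies by $\int_{B_\rho}|\hat f|^2\le |B_1|\rho^n\|f\|_{L^1}^2$ (using $\|\hat f\|_{L^\infty}\le\|f\|_{L^1}$) and the high frequencies by $\rho^{-2s}\|f\|_{H^s_0}^2$ (using the Fourier characterization of the Gagliardo seminorm), and then optimizes in $\rho$; inequality \eqref{L7678} is then deduced from \eqref{L76} via $\|f\|_{L^1}\le\|f\|_{L^2}\,|\{f>0\}|^{1/2}$, rather than proved directly from a Sobolev inequality as you do. What the paper's route buys is uniformity: it works for every $n\ge1$ and $s\in(0,1)$ with no subcriticality assumption, and it only invokes the Fourier equivalence of the seminorm rather than the embedding theorem. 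What your route buys is brevity and transparency of the exponents, since everything reduces to H\"older once $\|f\|_{L^{2^*_s}}\le C\|f\|_{H^s_0}$ is granted.

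The genuine gap is that the lemma is stated for all $s\in(0,1)$ (with $C=C(n,s)$, so for all $n\ge1$), while your proof only covers $n>2s$. You acknowledge only the borderline case $n=2s$, i.e.\ $n=1$, $s=1/2$, and dismiss it with a ``logarithmic variant''; but such a variant (embedding into $L^p$ for finite $p$ and interpolating) gives exponents strictly different from $n/(n+2s)$ and $2s/(n+2s)$, so it does not yield \eqref{L76} as stated. You do not address $n=1$, $s\in(1/2,1)$ at all: there $2^*_s=2n/(n-2s)$ is negative, the embedding you quote does not exist, and the natural substitute $\|f\|_{L^\infty}\le C\|f\|_{H^s_0}$ is false by scaling ($\|f(\lambda\cdot)\|_{H^s_0}^2=\lambda^{2s-n}\|f\|_{H^s_0}^2\to0$ as $\lambda\to0$ while the sup norm is unchanged), so the interpolation scheme does not repair itself automatically. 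Since \eqref{L76} is dimensionally consistent and true in that range (as the paper's frequency-splitting proof shows), your argument proves a weaker statement than the one claimed; this is harmless for the application in this paper ($n=1$, $s\in(0,1/2)$), but it should either be covered by the Fourier argument or the statement restricted accordingly.
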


\begin{proof}
We start by proving~\eqref{L76},
which is a variation of the classical
Nash inequality. Without loss of generality,
we suppose that~$f\in L^1(\R^n)$, otherwise the
right hand side of~\eqref{L76} is infinite and there is
nothing to prove. Given~$\rho>0$, we have
\begin{equation}\label{L73}
\int_{\R^n\setminus B_\rho} |\hat{f}(\xi)|^2\,d\xi
\le \int_{\R^n\setminus B_\rho} \frac{|\xi|^{2s}}{\rho^{2s}}
|\hat{f}(\xi)|^2\,d\xi\le C \rho^{-2s} \| f\|_{H^{s}_0(\R^n)}^2.\end{equation}
Here we have used the notation of the norm~$\|\cdot\|_{H^{s}_0(\R^n)}$,
as introduced in~\eqref{CLNC} and its equivalent in Fourier
spaces (see e.g. Proposition~3.4 in~\cite{DPV12}).
On the other hand,~$|\hat{f}(\xi)| \le \|f\|_{L^1(\R^n)}$ for
any $\xi\in\R^n$, and so by integrating over $B_\rho$ we obtain
$$ \int_{B_\rho} |\hat{f}(\xi)|^2\,d\xi \le |B_1|\,\rho^n \|f\|_{L^1(\R^n)}^2.$$
By adding this to~\eqref{L73} we obtain
$$ \|f\|_{L^2(\R^n)}^2 =\|\hat{f}\|_{L^2(\R^n)}^2
\le C \rho^{-2s} \| f\|_{H^{s}_0(\R^n)}^2 + |B_1|\,\rho^n \|f\|_{L^1(\R^n)}^2.$$
Since this estimate is valid for any~$\rho>0$, we now
choose
$$ \rho:= \big(\| f\|_{H^{s}_0(\R^n)} /\|f\|_{L^1(\R^n)}\big)^{2/(n+2s)} $$
to obtain 
\begin{equation*}
\|f\|_{L^2(\R^n)}^2 \le(C+|B_1|)\,\| f\|_{H^{s}_0(\R^n)}^{2n/(n+2s)}\,
\|f\|_{L^1(\R^n)}^{4s/(n+2s)}
,\end{equation*}
which gives~\eqref{L76}.

Now we prove~\eqref{L7678} by using~\eqref{L76}
and the H\"older inequality: we have
\begin{eqnarray*}
\|f\|_{L^2(\R^n)}^{n+2s} &\le& C\| f\|_{H^{s}_0(\R^n)}^n\,
\|f\|_{L^1(\R^n)}^{2s} \\
&\le& C\| f\|_{H^{s}_0(\R^n)}^n\,
\left[ \|f\|_{L^2(\R^n)}\,|\{f>0\}|^{1/2}
\right]^{2s}
\\ &=& C\| f\|_{H^{s}_0(\R^n)}^n\,\|f\|_{L^2(\R^n)}^{2s}\,
|\{f>0\}|^{s},
\end{eqnarray*}
which implies~\eqref{L7678}.
\end{proof}

We can now prove a uniform pointwise
estimate using a De Giorgi-type argument.
For the sake of generality, we prove it
for any~$s\in(0,1)$ and any~$n\ge 1$
(though we only need it here for~$n=1$ and~$s\in(0,1/2)$).

\begin{theorem}\label{thm_sup}
Let $s \in (0,1)$ and let~$\psi\in H^{s}(\R^n)$ be a weak solution to
\begin{equation*}
-L_s\psi = \lambda \psi +b \quad {\mbox{in }} \R^n, 
\end{equation*}
with $b,\lambda\in L^{\infty}(\R^n)$.
Then $\psi\in L^{\infty}(\R^n)$ and
\begin{equation*}
\|\psi\|_{L^{\infty}(\R^n)} \leq  C
\end{equation*}
where the constant $C>0$ depends only on $n$, $s$, 
$\|\psi\|_{L^2(\R^n)}$,
$\|\lambda\|_{L^{\infty}(\R^n)}$, and~$\| b\|_{L^{\infty}(\R^n)}$.
\end{theorem}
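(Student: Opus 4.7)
The plan is to run a De Giorgi level-set iteration, with \eqref{L7678} playing the role of the Sobolev embedding. Fix a level $C_0>0$ to be chosen large at the end, set $C_k := C_0(1-2^{-k})$, and work with the truncations $\psi_k := (\psi - C_k)_+$. Each $\psi_k$ lies in $H^{s}(\R^n)$, vanishes on $\{\psi\le C_k\}$, and $\psi_{k+1}\le \psi_k$. The quantity to iterate is $U_k := \|\psi_k\|_{L^2(\R^n)}^2$, so that $U_0 \le \|\psi\|_{L^2(\R^n)}^2$.

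The first step is to derive an energy inequality by inserting $\psi_{k+1}$ as a test function in the weak formulation of $-L_s \psi = \lambda\psi + b$. The elementary pointwise inequality
\[
\bigl(\psi(x)-\psi(y)\bigr)\bigl(\psi_{k+1}(x)-\psi_{k+1}(y)\bigr) \;\geq\; \bigl(\psi_{k+1}(x)-\psi_{k+1}(y)\bigr)^{2},
\]
obtained by a short case analysis, upgrades the bilinear form produced by $L_s$ into $\|\psi_{k+1}\|_{H^s_0(\R^n)}^2$ (up to a fixed dimensional constant). On the support of $\psi_{k+1}$ one has $\psi > C_{k+1} > 0$, so $|\psi|\leq \psi_{k+1} + C_0$, and the energy bound becomes
\[
\|\psi_{k+1}\|_{H^s_0(\R^n)}^2 \;\leq\; c\,\|\lambda\|_{L^\infty} U_{k+1} \;+\; c\bigl(C_0\|\lambda\|_{L^\infty}+\|b\|_{L^\infty}\bigr)\,\|\psi_{k+1}\|_{L^1(\R^n)},
\]
for some absolute constant $c$.

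The second step is to feed the iteration. Chebyshev's inequality applied to $\psi_k$ gives $|\{\psi_{k+1}>0\}| \leq 4^{k+1} U_k / C_0^2$, while combining the inclusion $\{\psi_{k+1}>0\}\subset\{\psi_k \ge C_0/2^{k+1}\}$ with $\psi_{k+1}\le\psi_k$ yields $\|\psi_{k+1}\|_{L^1(\R^n)} \leq 2^{k+1} U_k/C_0$. Substituting into \eqref{L7678} applied to $f=\psi_{k+1}$,
\[
U_{k+1} \;\leq\; c'\,\|\psi_{k+1}\|_{H^s_0(\R^n)}^2\,\bigl(4^{k+1} U_k/C_0^2\bigr)^{2s/n}.
\]
Provided the smallness condition $c' \|\lambda\|_{L^\infty}(4^{k+1}U_k/C_0^2)^{2s/n}\le 1/2$ holds (so that the $U_{k+1}$ term on the right of the energy bound can be absorbed on the left), one is left with a nonlinear recursion of the form
\[
U_{k+1} \;\leq\; A\,B^{k}\,C_0^{-4s/n}\,U_k^{\,1+2s/n},
\]
with $A,B>0$ depending only on $n,s,\|\lambda\|_{L^\infty},\|b\|_{L^\infty}$.

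The final step is the standard nonlinear iteration lemma: the recursion forces $U_k\to 0$ as soon as $U_0$ is dominated by a constant multiple of $C_0^{2}$ (with a multiplicative constant depending on $A,B,s,n$). Since $U_0\le\|\psi\|_{L^2(\R^n)}^{2}$, choosing $C_0 = K(n,s,\|\lambda\|_{L^\infty},\|b\|_{L^\infty})\max\!\bigl(\|\psi\|_{L^2(\R^n)},1\bigr)$ makes the iteration close and simultaneously enforces the smallness condition at every step. Then $U_k\to 0$ yields $(\psi-C_0)_+\equiv 0$, i.e.\ $\psi\le C_0$ a.e.; applying the identical argument to $-\psi$, which satisfies $-L_s(-\psi)=\lambda(-\psi)+(-b)$ with the same structural bounds, gives the opposite bound $\psi\ge -C_0$. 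The principal technical hurdle is precisely the term $\|\lambda\|_{L^\infty}U_{k+1}$ on the right of the energy identity, which has no a priori bound in terms of $U_k$ alone; its absorption depends on the small factor $|\{\psi_{k+1}>0\}|^{2s/n}$ produced by the Nash-type inequality \eqref{L7678}, and verifying that this absorption propagates self-consistently along the whole iteration is what dictates the precise size of $C_0$.
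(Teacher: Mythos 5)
Your proposal is correct and follows essentially the same route as the paper: a De Giorgi level-set iteration built on the Nash-type inequality \eqref{L7678}, Chebyshev's inequality, and the same pointwise inequality used to test with the truncations. The only real difference is bookkeeping: the paper normalizes $\phi=\delta\psi/\|\psi\|_{L^2(\R^n)}$, iterates toward the fixed level $1$, and uses the bound $\phi<2^{k+1}w_k$ on $\{w_{k+1}>0\}$ to control the $\lambda\psi$ term purely by $U_k$, thereby avoiding the $U_{k+1}$-absorption and the self-consistent smallness condition that your version must (and, with your choice of $C_0$, does) propagate along the iteration.
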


\begin{proof}
First, for any $0<\delta <<1$ (we will choose later a suitable $\delta$,
see formula \eqref{deltaC} below), we consider the function~$\phi$ defined as
\begin{equation*}
\phi(x):=\frac{\delta \psi(x)}{\|\psi\|_{L^2(\R^n)}}, \quad {\mbox{for any }} x\in\R^n.
\end{equation*}
By construction, 
\begin{equation*}
\|\phi\|_{L^2(\R^n)} = \delta,
\end{equation*}
and
\begin{equation}\label{eq_cresce}
-L_s\phi = \lambda \phi + \delta b/\|\psi\|_{L^2(\R^n)}.
\end{equation}
In order to prove the theorem, it will suffice to prove that
\begin{equation}\label{LtwoLinfty}
\| \phi\|_{L^{\infty}(\R^n)} \le 1, 
\end{equation}
since this implies that 
$$ \|\psi\|_{L^{\infty}(\R^n)}\le \frac{\|\psi\|_{L^2(\R^n)}}{\delta}\|\phi\|_{L^{\infty}(\R^n)} \le \frac{\|\psi\|_{L^2(\R^n)}}{\delta}
$$
and~$\delta$ is fixed.

Now, for any integer $k\in\N$, we consider the function $w_k$ defined as follows
$$
w_k(x):= (\phi(x)- (1 -2^{-k}))^+, \quad {\mbox{ for any }}x\in\R^n.
$$
By construction, $w_k\in H^{s}(\R^n)$, $w_k (\pm \infty) = 0$, and  
\begin{equation}\label{eq_9star}
w_{k+1}(x) \leq w_k(x) \quad {\mbox{a.e. in }} \R^n.
\end{equation}
The following inclusion
\begin{equation}\label{inclusion} 
\big\{ w_{k+1} > 0 \big\} \subseteq \big\{ w_k > 2^{-(k+1)} \big\}
\end{equation}
holds true for all $k\in\N$. Indeed, if $x\in\big\{ w_{k+1} > 0 \big\}$, then 
$$ 0<w_{k+1}(x)=\phi(x)-1+2^{-k-1}$$
hence
$$ \phi(x)-(1-2^{-k})> 2^{-k}-2^{-k-1}=2^{-k-1}$$
and so~$w_k(x)>2^{-k-1}$, thus proving~\eqref{inclusion}.
Moreover, we have the inequality
\begin{equation}\label{4.10bis}
\phi(x) < 2^{k+1} w_k(x) \quad {\mbox{ for any }} x \in  \big\{ w_{k+1} >0 \big\}.
\end{equation}
Indeed, if $x\in\big\{ w_{k+1} > 0 \big\}$ then 
$$ w_k(x)\ge w_{k+1}(x)=\phi(x)-(1-2^{-k-1}), $$ 
which together with \eqref{inclusion} implies 
\begin{eqnarray*}
\phi(x) &\le & w_k(x)+(1-2^{-k-1})=
w_k(x)+(2^{k+1}-1)2^{-k-1}\\ &<& w_k(x)+(2^{k+1}-1)w_k(x)=
2^{k+1}w_k(x). 
\end{eqnarray*}
This proves~\eqref{4.10bis}.

Also, we remark that for any $v\in H^{s}(\R^n)$ we have
\begin{equation}\label{uguale}
\big(v^+(x)-v^+(y)\big)\big(v(x)-v(y)\big)\geq | v^+(x) - v^+(y)|^{2},
\end{equation}
for all $x, y \in \R^n$.
In order to check this, let assume that $v(x) \geq v(y)$. There is no loss of generality in such assumption, since the roles of $x$ and $y$ can
be interchanged. Then, one can reduce to the case when 
$x\in\{ v> 0\}$ and $y\in\{ v \le 0 \}$, as otherwise the inequality in~\eqref{uguale}  plainly follows. Finally, we notice that in such a case \eqref{uguale} becomes
$$ (v(x)-v(y))v(x) \ge v(x)^{2} $$
which does hold since $v(y)\le0$ and $v(x)>0$.
This proves~\eqref{uguale}.

We now prove~\eqref{LtwoLinfty} by a standard iterative argument based on estimating the decay of the quantity
$$
U_k := \| w_k \|^2_{L^2(\R^n)}.
$$
First, in view of~\eqref{uguale} with $v:=\phi-(1-2^{-k})$, we have
\begin{eqnarray*}
\|w_{k+1}\|^2_{{H}_0^{s}(\R^n)} 
&:=& \int_{\R^n}\int_{\R^n} \frac{|w_{k+1}(x)-w_{k+1}(y)|^2}{|x-y|^{n+2s}}\, dx\, dy \\
&\leq& \int_{\R^n}\int_{\R^n}\frac{
\big(\phi(x)-\phi(y)\big)\big(w_{k+1}(x)-w_{k+1}(y)\big)}
{|x-y|^{n+2s}}\, dx\, dy.
\end{eqnarray*}
Thus, plugging $w_{k+1}$ as a test function in~\eqref{eq_cresce}, we obtain 
$$ \|w_{k+1}\|^2_{H_0^{s}(\R^n)}\leq  
\int_{\{w_{k+1}>0\}} \left(\lambda(x)\phi(x) + \frac{\delta\, b(x)}{\|\psi\|_{L^2(\R^n)}} \right)w_{k+1}(x)\,dx. $$
Notice that if $x\in\{w_{k+1}>0\}$ then $\phi(x)>0$, 
and therefore, using~\eqref{4.10bis} and \eqref{eq_9star}, we get
\begin{equation}\begin{split}\label{eq_9star2}
\|w_{k+1}\|^2_{H_0^{s}(\R^n)}\leq & \int_{\{w_{k+1}>0\}} 
\left( \sup_{\R^n}|\lambda|\, \phi(x)\,w_{k+1}(x)+
\frac{\delta \displaystyle\sup_{\R^n}|b|}{\|\psi\|_{L^2(\R^n)}}w_{k+1}(x)
\right)\, dx \\
\leq & \int_{\{w_{k+1}>0\}}
\left( \sup_{\R^n}|\lambda|\, 2^{k+1}\,w_k(x)\,w_{k+1}(x)+
\frac{\delta 
\displaystyle\sup_{\R^n}|b|}{\|\psi\|_{L^2(\R^n)}}w_{k+1}(x)
\right)\, dx \\
\leq & \int_{\{w_{k+1}>0\}}
\left( \sup_{\R^n}|\lambda|\, 2^{k+1}\,w_k^2(x)+
\frac{\delta
\displaystyle\sup_{\R^n}|b|}{\|\psi\|_{L^2(\R^n)}}w_{k}(x)
\right)\, dx \\
\leq & \sup_{\R^n}|\lambda|\, 2^{k+1}U_k + \frac{\delta\,\displaystyle\sup_{\R^n}|b|}{\|\psi\|_{L^2(\R^n)}}
\sqrt{|\{ w_{k+1}>0\}|} \; U_k^{\frac{1}{2}},
\end{split}\end{equation}
where we have also used the H\"older inequality.

Also, by~\eqref{inclusion} and Chebychev's inequality, one has
\begin{equation}\label{8.11bis}
|\{w_{k+1}>0\}| \le |\{ w_{k}>2^{-(k+1)}\}| \le  2^{2(k+1)} U_{k}, 
\end{equation}
so that~\eqref{eq_9star2} becomes
\begin{equation}\label{eq10ast2}
\|w_{k+1}\|^2_{H^s_0(\R^n)}\leq \left(\sup_{\R^n}|\lambda| + \frac{\delta \,\displaystyle\sup_{\R^n}|b|}{\|\psi\|_{L^2(\R^n)}} \right) 2^{k+1}U_k.
\end{equation}
On the other hand, using~\eqref{L7678}
(with~$f:=w_{k+1}$ here)
we have 
\begin{equation}\label{eq_10star}
\displaystyle
U_{k+1} \leq c \|w_{k+1}\|^2_{{H}_0^{s}(\R^n)} \big|\big\{ w_{k+1}>0\big\}\big|^{\frac{2s}{n}},
\end{equation}
where the constant $c>0$ only depends on $n$ and $s$.

Combining~\eqref{eq10ast2} with~\eqref{eq_10star} and using \eqref{8.11bis}, 
we get
\begin{equation*}\begin{split}
U_{k+1} &\leq\, c \left(\sup_{\R^n}|\lambda| + \frac{\delta\, \displaystyle\sup_{\R^n}|b|}{\|\psi\|_{L^2(\R^n)}} \right)2^{k+1}U_k \left(2^{2(k+1)}\right)^{\frac{2s}{n}}U_k^{\frac{2s}{n}}\\
&=\, c\left(\sup_{\R^n}|\lambda| + \frac{\delta \,\displaystyle\sup_{\R^n}|b|}{\|\psi\|_{L^2(\R^n)}} \right)
2^{(1+\frac{4s}{n})(k+1)} U_k^{1+\frac{2s}{n}} \\
&\le\, \left[1+c\left(\sup_{\R^n}|\lambda| + \frac{\delta\, \displaystyle\sup_{\R^n}|b|}{\|\psi\|_{L^2(\R^n)}} \right)\right]2^{(1+\frac{4s}{n})(k+1)}U_k^{1+\frac{2s}{n}}\\
&\le\, \left[
\left( 1+c \left(\sup_{\R^n}|\lambda| + \frac{\delta\, \displaystyle\sup_{\R^n}|b|}{\|\psi\|_{L^2(\R^n)}} \right)\right) 2^{1+\frac{4s}{n}} \right]^{k+1} 
U_k^{1+\frac{2s}{n}}\\
&=\, \bar{C}^{k+1} U_{k}^{1+\frac{2s}{n}}, 
\end{split}\end{equation*}
for some constant $\bar{C}>1$ depending on $\sup_{\R^n}|\lambda|$, $\sup_{\R^n}|b|$, 
$\|\psi\|_{L^2(\R^n)}$, $n$, and $s$.
Hence, an estimate of the form
$$ U_{k+1}\le \bar{C}^{k+1} U_{k}^{1+\alpha} \quad {\mbox{for any }} \ k\in\N,$$	 
holds for suitable $\bar{C}>1$ and $\alpha>0$. 

Now we perform our choice of~$\delta$, that is we assume that 
\begin{equation}\label{deltaC}
\delta^{2\alpha}=\frac{1}{\bar{C}^{(1/\alpha)+1}}. 
\end{equation}
We set 
\begin{equation}\label{deltaC bis}
\eta:=\frac{1}{\bar{C}^{1/\alpha}}. 
\end{equation}
Since $\bar C>1$ and $\alpha>0$, we have that 
\begin{equation}\label{starr}
\eta\in(0,1).
\end{equation}
We claim that
\begin{equation}\label{claim}
U_k \le\delta^2 \eta^k. 
\end{equation}
We show \eqref{claim} by induction. Indeed, we notice that 
$$ U_0:=\|w_0\|^2_{L^2(\R^n)}=\|\phi^+\|^2_{L^2(\R^n)}\le\|\phi\|^2_{L^2(\R^n)}=\delta^2, $$ 
which is \eqref{claim} for $k=0$. 
Now, suppose that \eqref{claim} is true for $k$ and let us prove it for $k+1$: 
$$ U_{k+1}\le \bar{C}^{k+1} U_k^{1+\alpha}\le 
\bar{C}^{k+1}(\delta^2\eta^k)^{1+\alpha} = \delta^2\eta^k (\bar{C}\eta^\alpha)^k \bar{C}\delta^{2\alpha}
= \delta^2\eta^{k+1},
$$
where we have used \eqref{deltaC} and \eqref{deltaC bis}. 
Then, by~\eqref{starr} and~\eqref{claim} we have that
\begin{equation}\label{limit}
\lim_{k\to\infty} U_{k} = 0.
\end{equation}

Noticing that 
$$ 0\le w_k=\left(\phi-(1-2^{-k})\right)^+\le |\phi|\in L^2(\R^n) $$ 
and 
$$ w_{k}\rightarrow(\phi-1)_{+}\quad {\mbox{a.e. in }}\R^{n}
\quad {\mbox{ as }}k\rightarrow +\infty,$$
by the Dominated Convergence Theorem we get
\begin{equation}\label{Ukappa}
\lim_{k\rightarrow +\infty}U_k=\|(\phi-1)^+\|_{L^2(\R^n)}^2. 
\end{equation}

Hence, from \eqref{limit} and \eqref{Ukappa} we have that $(\phi-1)^+=0$ 
almost everywhere in~$\R^n$, and so $\phi\le1$ almost everywhere in~$\R^n$. 
By replacing $\phi$ with $-\phi$ we get~\eqref{LtwoLinfty}, 
which concludes the proof. 
\end{proof}

\section{The corrector equation}\label{7sddd}

Now we consider the equation
\begin{eqnarray}\label{eq_correttore}
\left\{ 
\begin{array}{ll} 
L_s\psi-W''(u)\psi=u'+\eta\left(W''(u)-W''(0)\right) {\mbox{ in }}\R, \\
\psi\in H^s(\R), \\
\end{array} 
\right.
\end{eqnarray} 
where $u$ is the solution of \eqref{AC} and
\begin{equation}\label{eta2}
\eta=\frac{\displaystyle\int_{\R}(u'(x))^2\, dx}{W''(0)}. 
\end{equation} 
For a detailed heuristic motivation of such an equation
see Section~3.1 of~\cite{GM12}.

\begin{theorem}\label{THcorrettore}
There exists a unique solution~$\psi\in H^s(\R)$ to~\eqref{eq_correttore}. 
Furthermore
\begin{equation}\label{SCC}
{\mbox{$\psi\in C^{1,\alpha}_{loc}(\R)\cap L^{\infty}(\R)$
for some~$\alpha\in(0,1)$, and 
$\|\psi'\|_{L^{\infty}(\R)}<+\infty. $}}\end{equation}
\end{theorem}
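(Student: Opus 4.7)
The strategy is a Fredholm alternative for the linearized operator $\mathcal{L}\psi := L_s\psi - W''(u)\psi$, followed by a bootstrap using the uniform estimates of Section~\ref{L infty}. The natural element of $\ker\mathcal{L}$ is $u'$ itself, obtained by differentiating \eqref{AC} in $x$. For a solution to exist, the right-hand side $f := u' + \eta(W''(u) - W''(0))$ must be $L^2$-orthogonal to $u'$. Integration by parts gives $\int_\R W''(u) u' \, dx = [W'(u)]_{-\infty}^{+\infty} = 0$ and $\int u' \, dx = u(+\infty) - u(-\infty) = 1$, so $\int_\R (W''(u) - W''(0)) u' \, dx = -W''(0)$, and the compatibility condition reduces to $\int (u')^2 - \eta W''(0) = 0$, which is precisely the definition \eqref{eta2} of $\eta$.

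To produce $\psi \in H^s(\R)$, I would decompose $\mathcal{L} = (-L_s + W''(0)) + (W''(u) - W''(0))\cdot$. The first piece is coercive on $H^s(\R)$ by Plancherel and the positivity of $W''(0)$; the second is multiplication by a bounded function vanishing at $\pm\infty$ (by continuity of $W''$, $u(-\infty)=0$, $u(+\infty)=1$, and periodicity $W''(1) = W''(0)$), hence together with local Rellich compactness it is a compact perturbation from $H^s(\R)$ to $H^{-s}(\R)$. This makes $\mathcal{L}$ Fredholm of index $0$. To show $\ker\mathcal{L}\cap H^s(\R) = \R u'$, I would apply Theorem~\ref{DECAY} with $c \equiv W''(u)$ (which is $\geq \delta > 0$ outside a compact set, after absorbing the local sign-indefinite part into the source) and $g \equiv 0$ to obtain polynomial decay of any kernel element, and then a sliding/strong-maximum argument identifies it with a multiple of $u'$. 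The Fredholm alternative then yields a solution, uniquely determined by the normalization $\int \psi u' \, dx = 0$.

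Once $\psi \in H^s$ is in hand, since $u' \in L^\infty$ and $W''$ is bounded, $f \in L^\infty(\R)$, so Theorem~\ref{thm_sup} with $\lambda = -W''(u)$ and $b = -f$ upgrades $\psi$ to $L^\infty(\R)$. The rewritten equation $L_s\psi = W''(u)\psi + f$ then has a globally bounded right-hand side, and a Silvestre-type interior regularity bootstrap — each step promoting the Hölder exponent by $2s$ through the composition with the smooth function $W''(u)$ — reaches $\psi \in C^{1,\alpha}_{loc}(\R)$ after finitely many iterations. Since each step uses translation-invariant local estimates whose constants depend only on $\|\psi\|_{L^\infty(\R)}$ and on the $C^\alpha$-norm of $W''(u)\psi + f$ on unit balls, both uniform in the base point, one also obtains the global bound $\|\psi'\|_{L^\infty(\R)} < +\infty$.

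The most delicate step will be the existence part. For $s \in (0,1/4]$ the bound $|W''(u) - W''(0)| \lesssim |x|^{-2s}$ is borderline for the compactness of the perturbation (and $f$ itself may fail to lie in $L^2$), so one may need to bypass the Fredholm step by an approximation scheme: solve $\mathcal{L}\psi_R = \chi_R f + \lambda_R u'$ for a sequence $R \to \infty$, with $\lambda_R \to 0$ chosen so as to preserve orthogonality to $u'$, derive uniform $H^s$ bounds via an energy identity exploiting the asymptotic positivity of $W''(u)$ together with Theorem~\ref{DECAY}, and pass to the limit.
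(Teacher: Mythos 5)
Your outline is essentially a reconstruction of the part of the argument that the paper does \emph{not} redo: the paper proves Theorem~\ref{THcorrettore} by importing the existence/uniqueness scheme wholesale from Theorem~5.2 of \cite{DIPPV} (where the Fredholm-type analysis lives, including the identification of the kernel of $L_s-W''(u)$ with the span of $u'$ --- this is what \eqref{SCC2} refers to), and the genuinely new content for $s\in(0,1/2)$ is the regularity claim \eqref{SCC}, which can no longer be deduced from the fractional Morrey--Sobolev embedding. Your computation that the solvability condition $\int_\R(u')^2-\eta W''(0)=0$ is exactly \eqref{eta2} is correct and is indeed the heart of the matter, and your remark that uniqueness should be understood modulo the normalization $\int_\R\psi u'=0$ is the honest formulation, since $u'\in H^s(\R)$ solves the homogeneous equation. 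For the regularity, you and the paper both start from the De Giorgi-type $L^\infty$ bound of Theorem~\ref{thm_sup} and the $C^\alpha$ estimate of \cite{SV13b}; after that you propose a Silvestre-type H\"older bootstrap, while the paper instead applies Theorem~\ref{thm_sup} and \cite{SV13b} once more to the incremental quotients $\psi_h$, which solve \eqref{psi h}, and lets $h\searrow0$ (Proposition~2.8 of \cite{Sil06} is used only for the kernel elements in \eqref{SCC2}). Your bootstrap is workable since $W''\circ u$ and $u'$ are Lipschitz and the estimates are translation invariant, but you must track the step where the H\"older exponent crosses an integer; the difference-quotient route avoids that bookkeeping entirely.

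The one genuine soft spot is your closing paragraph on $s\in(0,1/4]$. You are right to worry: by Theorem~\ref{TH-decay}, $W''(u)-W''(0)$ decays only like $|x|^{-2s}$ (unless $W'''(0)=0$), so the right-hand side $f=u'+\eta(W''(u)-W''(0))$ need not belong to $L^2(\R)$, and one must check that it is an admissible datum for the $H^s$--$H^{-s}$ duality in which your Fredholm alternative is set; this is exactly borderline at $s=1/4$. The cutoff-and-limit patch you sketch does not by itself close this gap, because the obstruction is the membership of $f$ in the dual space rather than a compactness issue: if $f$ fails to pair with all of $H^s(\R)$, uniform energy bounds on the approximations are precisely what one cannot expect, and no limiting procedure will return a weak solution in that framework. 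Closing this case requires either additional structure on $W$ or the pointwise, decay-based construction of \cite{DIPPV}/\cite{GM12} (supported here by Theorems~\ref{DECAY} and~\ref{TH-decay} and by \eqref{unif}), which is the route the paper takes by citation; as written, your small-$s$ existence discussion is a plan rather than a proof.
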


\begin{proof} The proof is analogous to the one of Theorem~5.2
in~\cite{DIPPV}, where the result was
obtained for~$s\in(1/2,1)$, except for the modifications listed 
below.

The proof of Theorem~5.2
in~\cite{DIPPV} uses the condition~$s\in(1/2,1)$
only twice, namely before formula~(5.26)  and
at the end of Section~5. In the first occasion,
such condition was used to obtain that 
\begin{equation}\begin{split}\label{SCC2}
&{\mbox{a weak solution of $L_s v_0=W''(u) v_0$ is
$C^{2s+\alpha}(\R)\cap L^\infty(\R)$}} \\
&{\mbox{and, in particular, it is a classical solution.}}
\end{split}\end{equation}
In the second occasion, the condition on~$s$ was used to
obtain~\eqref{SCC}. In both the cases, the condition~$s\in(1/2,1)$
permitted to obtain the desired results as an easy consequence of
the fractional Morrey-Sobolev embedding (see e.g. Theorem 8.2
in~\cite{DPV12}), and this embedding is not available in the present
case.

Hence, we prove~\eqref{SCC} and~\eqref{SCC2} directly
from the regularity theory developed in Section~\ref{L infty},
thus obtaining that 
Theorem~\ref{THcorrettore} also holds when~$s\in(0,1/2)$.

To prove~\eqref{SCC2}, we first use Theorem~\ref{thm_sup} 
to obtain that~$v_0\in L^{\infty}(\R)$. Hence, from Proposition~5 in~\cite{SV13b} 
we deduce that~$v_0\in C^{\alpha}(\R)$ for any~$0<\alpha<2s$. 
In particular $v_0$ is a viscosity solution, and
since $W''(u)v_0\in C^{\alpha}(\R)$,
by Proposition~2.8 in~\cite{Sil06} 
we deduce that $v_0\in C^{\alpha+2s}(\R)$.
Thus $v_0$ is a classical solution, proving~\eqref{SCC2}.  
%

To show~\eqref{SCC}, we use Theorem~\ref{thm_sup} and Proposition~5 in~\cite{SV13b}
to obtain that~$\psi$ is a viscosity solution to~\eqref{eq_correttore} such that 
\begin{equation}\label{psi Linfinito}
\psi\in L^{\infty}(\R)\cap C^{\alpha}(\R) 
\end{equation} 
for any~$0<\alpha<2s$.

Now, we define the incremental quotient of~$\psi$ as 
$$ \psi_h(x):=\frac{\psi(x+h)-\psi(x)}{h} \quad {\mbox{for any }}x,h\in\R. $$
From~\eqref{eq_correttore} we have that~$\psi_h$ satisfies 
\begin{equation}\label{psi h}
L_s \psi_h(x)=W''(u(x+h))\psi_h(x) +W''_h(u(x))\psi(x)+ u'_h(x)+ \eta W''_h(u(x)) 
\end{equation}
where, for any~$x\in\R$, 
$$ u'_h(x):=\frac{u'(x+h)-u'(x)}{h} $$ 
and 
$$ W''_h(u(x)):=\frac{W''(u(x+h))-W''(u(x))}{h}. $$ 
From~\eqref{Wass}, \eqref{psi Linfinito}, and Lemma~6 in~\cite{PSV13}, we have that
$$ W''(u)\in L^\infty(\R) \quad {\mbox{ and}}\quad W''_h(u)\psi+u'_h+\eta W''_h(u)\in L^\infty(\R),$$ 
and so we can apply Theorem~\ref{thm_sup} to the solution of~\eqref{psi h}
to obtain that~$\psi_h\in L^{\infty}(\R)$. 
Using Proposition~5 in~\cite{SV13b}, this gives that
$\psi_h\in C^{\alpha}(\R)$ for any~$\alpha<2s$.

So we have proved that, for any~$x,y,h\in\R$, 
$$ |\psi_h(x)|\le C_1 \quad {\mbox{ and }}\quad |\psi_h(x)-\psi_h(y)|\le C_2|x-y|^\alpha, $$
for some positive constants~$C_1,C_2$. 
Letting~$h \searrow0$ we obtain that~$\psi'\in L^{\infty}(\R)\cap C^{\alpha}(\R)$, 
concluding the proof of~\eqref{SCC}. 
\end{proof}

\begin{rem}
Thanks to~\eqref{eq_correttore} and~\eqref{SCC}, 
we have that~$\psi\in H^s(\R)$ is uniformly continuous, 
and this implies that 
\begin{equation}\label{unif}
\displaystyle \lim_{x\rightarrow\pm\infty}\psi(x)=0. 
\end{equation}
\end{rem}

\section{Proof of Theorem~\ref{TH}}\label{csiufff}

The proof is now conceptually similar to the one given
in Section~8 of~\cite{DIPPV}, but some quantitative
estimates of Proposition~8.4 there need to be modified
when~$s\in(0,1/2)$.
For the facility of the reader, we provide
the details of the proof of Proposition~8.4 of~\cite{DIPPV}
in our case (this will be done in Proposition~\ref{y7} here below).

To this goal, we recall some of the notation of~\cite{GM12, DIPPV}
needed for our purposes.
We take an auxiliary parameter~$\delta>0$
and define~$(\overline x_i(t))_{i=1,\ldots,N}$ to
be the solution of the system 
\begin{eqnarray}\label{ODEdelta}
\left\{ 
\begin{array}{ll} 
\dot{\overline x}_i =\gamma\left(-\delta-\sigma(t,\overline x_i)+
\displaystyle\sum_{j\neq i}
\displaystyle\frac{\overline x_i-\overline x_j}{2s\, |\overline x_i-\overline x_j|^{1+2s}}\right) {\mbox{ in }}(0,+\infty), \\[4ex]
\overline x_i(0)=x_i^0-\delta.  \\
\end{array} 
\right.
\end{eqnarray} 
Moreover, we set
\begin{eqnarray}
&& \label{cbar} \overline c_i(t):=\dot{\overline x}_i(t) \\
&& \label{tildesigma}
\tilde\sigma:=\frac{\delta+\sigma}{\beta}, \quad {\mbox{ where }}\beta=W''(0)
{\mbox{ was introduced in }} \eqref{beta}, \\
&& \label{supsol}
\overline v_{\epsilon}(t,x):=
\epsilon^{2s}\tilde\sigma(t,x)+
\sum_{i=1}^N\left\lbrace u\left(
\frac{x-\overline x_i(t)}{\epsilon}\right)-
\epsilon^{2s}\overline c_i(t)
\psi\left(\frac{x-\overline x_i(t)}{\epsilon}\right)\right\rbrace, 
\end{eqnarray}
where~$u$ is given in Theorem~\ref{TH-decay} and~$\psi$ in Theorem~\ref{THcorrettore}. 
We set
\begin{equation}\label{utildei}
\tilde u_i:=u\left(\frac{x-\overline{x}_i(t)}{\epsilon}\right)- H\left(\frac{x-\overline{x}_i(t)}{\epsilon}\right), 
\end{equation}
where~$H$ is the Heaviside function,
$$ \psi_i:=\psi\left(\frac{x-\overline{x}_i(t)}{\epsilon}\right).  $$
and
\begin{equation}\label{Ieps}
I_{\epsilon}:=\epsilon(\overline{v}_\epsilon)_t+\frac{1}{\epsilon^{2s}}\left(W'(\overline{v}_\epsilon)-\epsilon^{2s}L_s\overline{v}_\epsilon-\epsilon^{2s}\sigma\right). 
\end{equation}
With this notation we have that (see Lemma~8.3
in~\cite{DIPPV}),
for every~$i_0\in\left\lbrace 1,\ldots,N\right\rbrace$,
\begin{equation}\label{10.6} 
I_{\epsilon}=e_{\epsilon}^{i_0}+(\beta\tilde\sigma-\sigma)+O(\tilde 
u_{i_0})\biggl(\eta\,  \overline c_{i_0}+\tilde\sigma+
\sum_{{1\le i\le N}\atop{i\neq i_0}}
\frac{\tilde 
u_{i}}{\epsilon^{2s}}\biggr), 
\end{equation}
where the error~$e_{\epsilon}^{i_0}$ is given by
\begin{equation}\label{ei}
e_{\epsilon}^{i_0}:=O(\epsilon^{2s})+\sum_{{1\le i\le N}\atop{i\neq 
i_0}}
O(\psi_i)+\sum_{{1\le i\le N}\atop{i\neq i_0}}O(\tilde u_i)+
\sum_{{1\le i\le N}\atop{i\neq i_0}}
O\left(\frac{\tilde u_i^2}{\epsilon^{2s}}\right). 
\end{equation}

Now we can state the following result, which replaces Proposition~8.4
in~\cite{DIPPV}:

\begin{prop}\label{y7}
There exists $\delta_0>0$ such that, for any $0<\delta\leq\delta_0$ 
and~$T>0$, 
we have 
$$ (\overline v_{\epsilon})_{t}\geq\frac{1}{\epsilon}\biggl(L_s\overline v_{\epsilon}- \frac{1}{\epsilon^{2s}}W'(\overline v_{\epsilon})+\sigma\biggr) \quad {\mbox{ in }} (0,T)\times\R, $$
for $\epsilon>0$ sufficiently small. 
\end{prop}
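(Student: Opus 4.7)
The claim is equivalent to $I_\epsilon\geq 0$ on $(0,T)\times\R$, with $I_\epsilon$ defined in~\eqref{Ieps}. My plan is to work at a fixed but arbitrary $(t,x)\in(0,T)\times\R$, pick $i_0\in\{1,\dots,N\}$ minimizing $|x-\overline x_{i_0}(t)|$, and exploit the decomposition~\eqref{10.6}. Since $\beta\tilde\sigma-\sigma=\delta$ by~\eqref{tildesigma}, that decomposition reads $I_\epsilon=\delta+e_\epsilon^{i_0}+O(\tilde u_{i_0})\,B$, where $B:=\eta\,\overline c_{i_0}+\tilde\sigma+\sum_{i\neq i_0}\tilde u_i/\epsilon^{2s}$. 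It will suffice to show that, uniformly in $(t,x)$, both $|e_\epsilon^{i_0}|$ and $|O(\tilde u_{i_0})\,B|$ drop below $\delta/2$ for $\epsilon$ small.

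A preliminary ingredient is the non-collision of the trajectories of~\eqref{ODEdelta} on $[0,T]$: since the system has a repulsive first-order structure with Lipschitz perturbations, $d_T:=\min_{t\in[0,T],\ i\neq j}|\overline x_i(t)-\overline x_j(t)|>0$. Combined with the minimality of $i_0$, this gives $|x-\overline x_i(t)|\geq d_T/2$ for every $i\neq i_0$. Then~\eqref{phi} and~\eqref{unif} yield $|\tilde u_i|=O(\epsilon^{2s})$, $\tilde u_i^2/\epsilon^{2s}=O(\epsilon^{2s})$, and $|\psi_i|=o(1)$, so that~\eqref{ei} gives $|e_\epsilon^{i_0}|=o(1)$ uniformly as $\epsilon\to 0$.

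For $B$, I would plug $\overline c_{i_0}=\dot{\overline x}_{i_0}$ from~\eqref{ODEdelta} into the definition and use $\eta\gamma=1/\beta$. The $\delta$-contributions then cancel against those in $\tilde\sigma$, leaving the Lipschitz difference $(\sigma(t,x)-\sigma(t,\overline x_{i_0}))/\beta$, a difference of values of the kernel $y\mapsto y/|y|^{1+2s}$ at points in $\{|y|\geq d_T/2\}$, and a residue quantifying the accuracy of the asymptotic expansion for $\tilde u_i/\epsilon^{2s}$. By Theorem~\ref{TH-decay} that residue is $O(\epsilon^{\vartheta-2s})$ and, together with~\eqref{sigma} and the smoothness of the kernel away from the origin, one obtains a uniform bound $|B|\leq C_T$ and, for $x$ near $\overline x_{i_0}$, the sharper estimate $|B|\leq C(|x-\overline x_{i_0}|+\epsilon^{\vartheta-2s})$.

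The main obstacle is that $|\tilde u_{i_0}|$ is only bounded (not small) when $x$ lies near $\overline x_{i_0}$, so $|O(\tilde u_{i_0})\,B|$ does not automatically inherit the smallness of $B$. I would resolve this with a case split at $|x-\overline x_{i_0}|=M\epsilon$ for a large parameter $M$. In the inner region $|x-\overline x_{i_0}|\leq M\epsilon$, the sharper bound gives $|B|\leq C(M\epsilon+\epsilon^{\vartheta-2s})=o(1)$, hence $|O(\tilde u_{i_0})\,B|=o(1)$; this is precisely where having $\vartheta$ strictly greater than $2s$ in Theorem~\ref{TH-decay} is indispensable, since the cruder~\eqref{phi} alone would leave an $O(1)$ residue in $B$. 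In the outer region $|x-\overline x_{i_0}|\geq M\epsilon$,~\eqref{phi} yields $|\tilde u_{i_0}|\leq CM^{-2s}$ while the uniform bound on $B$ gives $|O(\tilde u_{i_0})\,B|=O(M^{-2s})$. Choosing first $M$ large, then $\epsilon$ small, and requiring $\delta_0$ small enough to absorb all implicit constants, one obtains $I_\epsilon\geq\delta/2>0$, completing the proof.
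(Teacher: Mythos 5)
Your proposal is correct, and it runs on the same engine as the paper's proof: the decomposition \eqref{10.6} with $\beta\tilde\sigma-\sigma=\delta$ as the positive term, the refined decay of Theorem~\ref{TH-decay} (crucially $\vartheta>2s$) to replace $\sum_{i\neq i_0}\tilde u_i/\epsilon^{2s}$ by the interaction kernel up to $O(\epsilon^{\vartheta-2s})$, the exact cancellation coming from the modified ODE \eqref{ODEdelta} together with $\eta\gamma=1/\beta$, and the fact that $e^{i_0}_\epsilon\to0$ via \eqref{phi} and \eqref{unif}. Where you differ is the organization of the case analysis. The paper splits at the intermediate scale $|x-\overline x_{i_0}(t)|=\epsilon^{\gamma}$ with $0<\gamma<(\vartheta-2s)/\vartheta$, takes $i_0$ arbitrary in the far regime, and must then balance a divergent bracket $O(\epsilon^{-2\gamma s})$ against $\tilde u_{i_0}=O(\epsilon^{2s(1-\gamma)})$, which forces the extra checks $1-2\gamma>0$ (using $\vartheta\le 4s$) and $\vartheta-2s-\gamma\vartheta>0$. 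You instead always take $i_0$ to be the nearest trajectory, which (by separation of the $\overline x_i$ on $[0,T]$, an ingredient the paper also uses implicitly) keeps all other points at distance $\ge d_T/2$ and makes your bracket $B$ uniformly bounded; the split at $|x-\overline x_{i_0}|=M\epsilon$ then only needs $|B|\le C(M\epsilon+\epsilon^{\vartheta-2s})$ in the inner region and $|\tilde u_{i_0}|\le CM^{-2s}$ times $|B|\le C_T$ in the outer region, with $M=M(\delta,T)$ chosen first and $\epsilon$ last. This buys a cleaner argument with no exponent arithmetic, and it isolates exactly where $\vartheta>2s$ is needed (only in the inner region); the paper's version, on the other hand, does not need to single out the nearest point in the far regime. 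One phrasing quibble: shrinking $\delta_0$ does not help absorb constants (smaller $\delta$ makes the inequality harder); what actually closes the argument is, for each fixed $\delta$ and $T$, the choice of $M$ large and then $\epsilon$ small, exactly as you in fact do.
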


\begin{proof}
Recalling the definition of $I_{\epsilon}$ in \eqref{Ieps},
our goal is to show that
\begin{equation}\label{GP}
I_{\epsilon}\geq0\end{equation}
for $\epsilon$ small enough.
For this, we make a preliminary observation:
recalling the definition of 
$\tilde u_i$ in~\eqref{utildei} 
and using Theorem~\ref{TH-decay}, we obtain that, for 
any~$i\in\{1,\dots,N\}$,
\begin{equation}\label{999}
\left|\tilde u_i+\frac{\epsilon^{2s}}{2sW''(0)}\frac{x-\overline 
x_i(t)}{|x-\overline x_i(t)|^{1+2s}}\right|
\leq \frac{C\, \epsilon^{\vartheta}}{|x-\overline x_i(t)|^{\vartheta}}. 
\end{equation}
Since~$\vartheta>2s$, we can choose~$\gamma$ such that
\begin{equation}\label{scelta}
0<\gamma<\frac{\vartheta-2s}{\vartheta}. 
\end{equation}
Now we divide the proof of~\eqref{GP}
by dealing with two separate cases.
\noindent
\\{\it Case~1:} Suppose that there exists 
$i_0\in\left\lbrace1,\ldots,N\right\rbrace$ 
such that 
\begin{equation}\label{caso1}
|x-\overline x_{i_0}(t)|\leq\epsilon^{\gamma}.
\end{equation} 
Therefore, since the~$\overline x_{i}$'s are well-separated,
for $\epsilon$ sufficiently small we have that 
\begin{equation}\label{varteta}
|x-\overline x_{i}(t)|\geq\kappa>0, \ {\mbox{ for any }} \ i\ne i_0,
\end{equation}
where $\kappa$ is a constant independent of $\epsilon$. 

Hence, thanks to~\eqref{999} and~\eqref{varteta},
$$ \biggl|\sum_{i\neq i_0}\left(\frac{\tilde u_i}{\epsilon^{2s}}+\frac{1}{2sW''(0)}\frac{x-\overline x_i(t)}{|x-\overline x_i(t)|^{1+2s}}\right)\biggr|\leq \frac{C\, \epsilon^{\vartheta}}{\epsilon^{2s}}\sum_{i\neq i_0}\frac{1}{|x-\overline x_i(t)|^{\vartheta}}\leq C\, \epsilon^{\vartheta-2s}. $$
Therefore, from 
\eqref{10.6}, we deduce that 
\begin{equation}\begin{split}\label{666}
I_{\epsilon}=&\ e^{i_0}_{\epsilon}+\beta\tilde\sigma-\sigma+
O(\tilde u_{i_0})\biggl(\eta\,  \overline c_{i_0}+\tilde\sigma
+\sum_{i\neq i_0}\frac{\tilde u_{i}}{\epsilon^{2s}}\biggr)\\
=& \ e^{i_0}_{\epsilon}+\beta\tilde\sigma-\sigma
+O(\tilde u_{i_0})\biggl(\eta\, \overline c_{i_0}
+\tilde\sigma -\frac{1}{2sW''(0)}\sum_{i\neq i_0}
\frac{x-\overline x_i(t)}{|x-\overline x_i(t)|^{1+2s}} \biggr) + O(\epsilon^{\vartheta-2s}).
\end{split}\end{equation}

Now, we Taylor expand the function 
$\frac{x-\overline x_i(t)}{|x-\overline x_i(t)|^{1+2s}}$ for~$x$
in a neighborhood 
of the point~$\overline x_{i_0}(t)$, and we use~\eqref{caso1} to get 
\begin{equation}\begin{split}\label{8.17bis}
&\biggl|\sum_{i\neq i_0}\frac{x-\overline x_i(t)}{|x-\overline x_i(t)|^{1+2s}}- 
\sum_{i\neq i_0}\frac{\overline x_{i_0}(t)-\overline x_i(t)}{|\overline x_{i_0}(t)-\overline x_i(t)|^{1+2s}}\biggr|\\ 
&\qquad \qquad = \ \biggl|\sum_{i\neq i_0}\left(\frac{1}{|\xi-\overline x_i(t)|^{1+2s}} -(1+2s)\frac{(\xi-\overline x_i(t))^2}{|\xi-\overline x_i(t)|^{3+2s}}\right)(x-\overline x_{i_0}(t))\biggr| \\
& \qquad \qquad \leq\ \sum_{i\neq i_0}\frac{2+2s}{|\xi-\overline x_i(t)|^{1+2s}}\, \epsilon^{\gamma}\\
& \qquad \qquad \leq\ C\, \epsilon^{\gamma}, 
\end{split}\end{equation}
where $\xi$ is a suitable point lying on the segment joining $x$ to $\overline x_{i_0}(t)$ 
(and hence $|\xi-\overline x_i(t)|\geq\kappa/2$ thanks to~\eqref{caso1}). 
Therefore, using~\eqref{8.17bis} in~\eqref{666}, we have 
\begin{equation}\begin{split}\label{777}
I_{\epsilon}=&\ e^{i_0}_{\epsilon}+\beta\tilde\sigma-\sigma +O(\tilde u_{i_0})\biggl(\eta\, \overline c_{i_0}+\tilde\sigma -\frac{1}{2sW''(0)}\sum_{i\neq i_0}\frac{\overline x_{i_0}(t)-\overline x_i(t)}{|\overline x_{i_0}(t)-\overline x_i(t)|^{1+2s}} \biggr) \\
&\ + O(\epsilon^{\vartheta-2s}) +O(\epsilon^{\gamma}).
\end{split}\end{equation}
Now, we compute the term in parenthesis. From the definitions of 
$\eta$, $\overline c_{i_0}$ and $\tilde\sigma$ given in \eqref{eta2}, 
\eqref{cbar}, and \eqref{tildesigma} respectively, and 
recalling~\eqref{gamma},
we obtain
\begin{equation}\label{1114}\begin{split}
&\eta\, \overline c_{i_0}+\tilde\sigma -
\frac{1}{2sW''(0)}\sum_{i\neq i_0}\frac{\overline x_{i_0}(t)-
\overline x_i(t)}{|\overline x_{i_0}(t)-\overline x_i(t)|^{1+2s}} \\
=\; & \frac{1}{\gamma\, W''(0)} \dot{\overline x}_{i_0}(t)+ 
\frac{\delta}{W''(0)}+\frac{\sigma(t,x)}{W''(0)}
-\frac{1}{2sW''(0)}\sum_{i\neq i_0}\frac{\overline x_{i_0}(t)-\overline x_i(t)}{|\overline x_{i_0}(t)-\overline x_i(t)|^{1+2s}} \\
= \;& \frac{1}{W''(0)}\biggl(\frac{ 
\dot{\overline x}_{i_0}(t)
}{\gamma}+\delta+\sigma(t,\overline x_{i_0}(t))-\frac{1}{2s}\sum_{i\neq i_0}\frac{\overline x_{i_0}(t)-\overline x_i(t)}{|\overline x_{i_0}(t)-\overline x_i(t)|^{1+2s}}\biggr) \\ 
&\quad \ +\,\frac{\sigma(t,x)-\sigma(t,\overline x_{i_0}(t))}{W''(0)}. 
\end{split}\end{equation}
Recalling~\eqref{ODEdelta}, we have that 
$$ \frac{\dot{\overline x}_{i_0}(t)}{\gamma}+\delta+\sigma(t,\overline 
x_{i_0}(t))-\frac{1}{2s}\sum_{i\neq i_0}\frac{\overline x_{i_0}(t)-\overline x_i(t)}{|\overline x_{i_0}(t)-\overline x_i(t)|^{1+2s}}=0, $$
and so the term in parenthesis
in~\eqref{1114} vanishes. Therefore~\eqref{1114} becomes
\begin{eqnarray*}
\eta\, \overline c_{i_0}+\tilde\sigma -
\frac{1}{2sW''(0)}\sum_{i\neq i_0}\frac{\overline x_{i_0}(t)-
\overline x_i(t)}{|\overline x_{i_0}(t)-\overline x_i(t)|^{1+2s}}
& = & \frac{\sigma(t,x)-\sigma(t,\overline x_{i_0}(t))}{W''(0)}\\
& = & O(x-\overline x_{i_0}(t))\\
&= & O(\epsilon^{\gamma}),
\end{eqnarray*}
thanks to~\eqref{sigma} and~\eqref{caso1}.
Hence~\eqref{777} reads 
\begin{equation}\label{888}
I_{\epsilon}= e^{i_0}_{\epsilon}+\beta\tilde\sigma-\sigma + O(\epsilon^{\gamma})+ O(\epsilon^{\vartheta-2s}) +O(\epsilon^{\gamma}).
\end{equation}
Also, in the light of \eqref{tildesigma}, we see that
\begin{equation}\label{8.43bis}
\beta\tilde\sigma-\sigma=\delta>0.\end{equation}
Now, we claim that  
\begin{equation}\label{err_zero}
{\mbox{ the error~$e^{i_0}_{\epsilon}$ (that was defined 
in~\eqref{ei})
tends to zero as $\epsilon\rightarrow0$.}}
\end{equation} 
For this, we notice that~$\psi_i=\psi\left(\frac{x-\overline x_i(t)}{\epsilon}\right)$, 
with~$i\ne i_0$, tends to zero because 
of the behavior of
the corrector at infinity
(recall~\eqref{unif} and~\eqref{varteta}).
Moreover, thanks to~\eqref{phi} and~\eqref{varteta}
we have that, 
for~$i\ne i_0$,
$$ \tilde u_i=u\left(\frac{x-\overline x_i(t)}{\epsilon}\right)-H\left(\frac{x-\overline x_i(t)}{\epsilon}\right)= O\left(\frac{\epsilon^{2s}}{|x-\overline x_i(t)|^{2s}}\right)=O(\epsilon^{2s}) $$
and 
$$ \frac{(\tilde u_i)^2}{\epsilon^{2s}} =\frac{O(\epsilon^{4s})}{\epsilon^{2s}}=O(\epsilon^{2s}), $$
thus proving~\eqref{err_zero}. 

Hence, from~\eqref{888}, \eqref{8.43bis} and~\eqref{err_zero} 
we obtain that for $\epsilon$ sufficiently small 
$$ I_{\epsilon}\geq\frac{\delta}{2}>0, $$
which implies~\eqref{GP} in this case.

\noindent
\\ {\it Case~2:} Suppose that $|x-\overline 
x_i(t)|>\epsilon^{\gamma}$ for every 
$i\in\left\lbrace1,\ldots,N\right\rbrace$.
In this case, we can fix~$i_0$ arbitrarily, say~$i_0:=1$
for concreteness. We use~\eqref{999} to obtain
\begin{eqnarray*}
\biggl|\sum_{i\neq i_0}\left(\frac{\tilde u_i}{\epsilon^{2s}}+\frac{1}{2sW''(0)}\frac{x-\overline x_i(t)}{|x-\overline x_i(t)|^{1+2s}}\right)\biggr| 
&\leq& \frac{C\, \epsilon^{\vartheta}}{\epsilon^{2s}}\sum_{i\neq i_0}\frac{1}{|x-\overline x_i(t)|^{\vartheta}}\\
&\leq& C\frac{\epsilon^{\vartheta-2s}}{\epsilon^{\gamma\vartheta}}  =
C\, \epsilon^{\vartheta-2s-\gamma\vartheta}. 
\end{eqnarray*}
Therefore, by formula~\eqref{10.6} and the definition of $\tilde\sigma$ in 
\eqref{tildesigma} we have 
\begin{equation}\label{1140}
I_{\epsilon}= e^{i_0}_{\epsilon}+\delta+O(\tilde u_{i_0})
\biggl(\eta\, \overline c_{i_0}+
\tilde\sigma -\frac{1}{2sW''(0)}\sum_{i\neq i_0}
\frac{x-\overline x_i(t)}{|x-\overline x_i(t)|^{1+2s}} \biggr)+ 
O(\epsilon^{\vartheta-2s-\gamma\vartheta}). 
\end{equation}
Now we observe that, for any~$i\ne i_0$,
\begin{equation}\label{1139}
\left| \frac{x-\overline 
x_i(t)}{|x-\overline x_i(t)|^{1+2s}}\right|
\le \frac{1}{|x-\overline x_i(t)|^{2s}}\le 
\frac{1}{\epsilon^{2\gamma s}}=O(\epsilon^{-2\gamma s}).\end{equation}
Notice that this term is divergent as $\epsilon$ tends to
zero. Therefore, from~\eqref{1139} we conclude that
$$ \eta\, \overline c_{i_0}+
\tilde\sigma -\frac{1}{2sW''(0)}\sum_{i\neq i_0}
\frac{x-\overline x_i(t)}{|x-\overline x_i(t)|^{1+2s}}=
O(\epsilon^{-2\gamma s}),$$
since the other terms are bounded. By plugging this
into~\eqref{1140} we obtain 
\begin{equation}\label{1141}
I_{\epsilon}= e^{i_0}_{\epsilon}+\delta+O(\tilde u_{i_0})\cdot
O(\epsilon^{-2\gamma s})
+O(\epsilon^{\vartheta-2s-\gamma\vartheta}).
\end{equation}
Now we observe that
for every $i\in\left\lbrace1,\ldots,N\right\rbrace$, 
\begin{equation}\label{9112}\begin{split}
\tilde u_{i}&=u\left(\frac{x-\overline 
x_i(t)}{\epsilon}\right)\!-H\left(\frac{x-\overline x_i(t)}{\epsilon}\right)\\
&=O\left(\frac{\epsilon^{2s}}{|x-\overline x_i(t)|^{2s}}\right) 
=O\left(\frac{\epsilon^{2s}}{
\epsilon^{2\gamma s}}\right)
=O\left(\epsilon^{2s(1-\gamma)}\right).
\end{split}\end{equation}
As a consequence
\begin{equation}\label{9113}
\frac{(\tilde u_i)^2}{\epsilon^{2s}}=O\left(\epsilon^{2s(1-2\gamma)}\right) \quad
{\mbox{ and }} \quad
O(\tilde u_{i_0})\cdot
O(\epsilon^{-2\gamma s})=O\left(\epsilon^{2s(1-2\gamma)}\right). 
\end{equation}
We observe that, since~$\vartheta\le 4s$ (see \eqref{SGAMMA} and recall 
that $\alpha=4s$), from \eqref{scelta} we have 
\begin{equation}\label{maggiore2}
1-2\gamma>1-\frac{2(\vartheta-2s)}{\vartheta}= \frac{4s-\vartheta}{\vartheta}\ge 0. \end{equation}
Also, notice that, thanks again to~\eqref{scelta}, 
\begin{equation}\label{maggiore3}
\vartheta-2s-\gamma\vartheta>0.
\end{equation} 
By inserting~\eqref{9113} into~\eqref{1141} 
and recalling~\eqref{maggiore2} and~\eqref{maggiore3} we get
\begin{equation}\label{1146}
I_{\epsilon}= e^{i_0}_{\epsilon}+\delta+O(\epsilon^{\alpha}),
\end{equation}
for some~$\alpha>0$.
Now we check that 
\begin{equation}\label{9114}
{\mbox{the error term $e^{i_0}_{\epsilon}$ tends to zero as 
$\epsilon\rightarrow0$.}}\end{equation}
For this, we remark that, in 
this case,
$$ \frac{|x-\overline x_i(t)|}{\epsilon}\ge
\frac{\epsilon^{\gamma}}{\epsilon}=\epsilon^{\gamma-1},$$
which diverges for small~$\epsilon$, since~$\gamma<1$. 
Therefore, for $x$ fixed
as in the assumption of Case~2, we have that
$$ \psi_i(x)=\psi\left( 
\frac{x-\overline x_i(t)}{\epsilon}\right)\longrightarrow 0$$
as~$\epsilon\rightarrow 0$, due to the infinitesimal behavior
of~$\psi$ at infinity (see~\eqref{unif}).
Using this,
\eqref{9112}, \eqref{9113} and the definition of
the error term given in~\eqref{ei}, we obtain~\eqref{9114}.

Hence, by using~\eqref{9114} inside~\eqref{1146}
and recalling that~$\delta>0$, we conclude that
$$ I_{\epsilon}\geq\frac{\delta}{2}>0$$
for $\epsilon$ sufficiently smooth, thus proving~\eqref{GP}
in this case too.
\end{proof}

\end{document}